\documentclass[10pt,a4paper,reqno]{article}
\usepackage[T1]{fontenc}
\usepackage[all]{xy}
\usepackage{epsfig}
\usepackage{amssymb}
\usepackage{amsmath}
\usepackage{graphics,graphicx}
\usepackage{eepic}
\usepackage{epsfig}

\usepackage{vmargin}
\usepackage{theorem}
\usepackage{amsbsy}
\usepackage{amsfonts}
\usepackage{fig4tex}

\newtheorem{theo}{Theorem}[section]
\newtheorem{theorem}[theo]{Theorem}
\newtheorem{lemma}[theo]{Lemma}
\newtheorem{prop}[theo]{Proposition}

{\theorembodyfont{\rmfamily}

\newtheorem{remark}[theo]{Remark}}

\newenvironment{proof}{\noindent\text{\textbf{Proof.\:}}}{}

\def\qed{\hfill $\square$ \goodbreak}

\newcommand{\N}{\mathbb{N}}
\newcommand{\R}{\mathbb{R}}
\newcommand{\CC}{\mathbb{C}}
\newcommand{\C}{\mathcal{C}}
\renewcommand{\H}{\mathbb{H}}

\newcommand{\Om}{\Omega}

\newcommand{\G}{\Gamma}
\newcommand{\la}{\lambda}
\newcommand{\Arg}{{\rm Arg}}

\def\eps{\varepsilon}

\title{About H\"older-regularity of the convex shape minimizing $\lambda_2$}
\author{Jimmy Lamboley\footnote{Ceremade (UMR CNRS 7534), Universit\'e Paris-Dauphine, Place du Mar\'echal de Lattre de Tassigny, 75775 Paris C\'edex 16, France. E-mail: {\tt lamboley@ceremade.dauphine.fr}}}

\begin{document}
\maketitle
\begin{abstract}
In this paper, we consider the well-known following shape optimization problem:
$$\lambda_2(\Omega^*)=\min_{\stackrel{|\Omega|=V_0} {\Omega\textrm{ convex}}} \lambda_2(\Omega),$$
 where $\lambda_2(\Om)$ denotes the second eigenvalue of the Laplace operator with homogeneous Dirichlet boundary conditions in $\Om\subset\R^2$, and $|\Om|$ is the area of $\Om$. We prove, under some technical assumptions, that any optimal shape $\Omega^*$ is $\mathcal{C}^{1,\frac{1}{2}}$ and is not $\C^{1,\alpha}$ for any $\alpha>\frac{1}{2}$. We also derive from our strategy some more general
regularity results, in the framework of partially overdetermined boundary value problems, and we apply these results to some other shape optimization problems.
\\

{\it Keywords:\,} Shape optimization, Eigenvalues of the Laplacian, Regularity of free boundaries, Conformal map, Convex constraint, Overdetermined boundary value problems.
\smallskip

\end{abstract}

\section{Main result}\label{sect:main}

In this paper, we prove an optimal regularity result for the shape which minimizes the second eigenvalue of the 2-dimensional Laplacian, with homogeneous Dirichlet boundary conditions, under volume and {\bf convexity} constraints. Moreover, we make good use of the tools introduced to that end and we give some more general results about regularity of overdetermined elliptic PDE. Finally we apply these ones to some other shape optimization problems.

Let us first introduce our notations. All the results of this paper involve subsets of
$\R^2$, and $|\cdot|$ denotes the Lebesgue measure in $\R^2$.
Let $\Omega$ be an open set, with finite area in the plane, and let us denote by
$$0<\lambda_1(\Omega)\leq\lambda_2(\Omega)\leq\lambda_3(\Omega)\leq\ldots$$
its eigenvalues for the Laplace operator with homogeneous Dirichlet boundary conditions (Dirichlet-Laplacian).\\

Here, we are mainly interested in studying the regularity of the solution of the following shape optimization problem :
\begin{equation}\label{eq:pb}
\Omega^*\textrm{ an open convex set, such that }|\Omega^*|=V_0,
\textrm{ and }\lambda_2(\Omega^*)=\min_{\stackrel{|\Omega|=V_0} {\Omega\textrm{ convex}}} \lambda_2(\Omega),
\end{equation}
where $V_0$ is a given positive real number.

A theorem by Krahn and Szeg\"o asserts that the solution of problem (\ref{eq:pb}) with no convexity constraint
is the disjoint union of two identical balls (this is an easy consequence of the so-called Faber-Krahn Theorem which asserts that the shape minimizing the first eigenvalue among sets of prescribed volume is a ball, see Figure \ref{Minlambda} below).
The problem (\ref{eq:pb}) with the convexity constraint is studied in \cite{HO}: they prove
the existence and some geometric properties of optimal shapes $\Omega^*$.
 In particular, they show  that the stadium (i.e. the convex hull of two identical tangent disks of suitable area) is not a solution, whereas it was expected and supported by numerical experiments (see e.g. \cite{T}). They also
prove, under some assumptions about the regularity and the geometry of $\Omega^*$, some optimality conditions satisfied by $\Omega^*$ (see Section \ref{sect:HO}; see also \cite{O} for numerical results, showing that the optimal shape for problem \eqref{eq:pb} is different, but close to the stadium).\\

\begin{figure}[!ht]
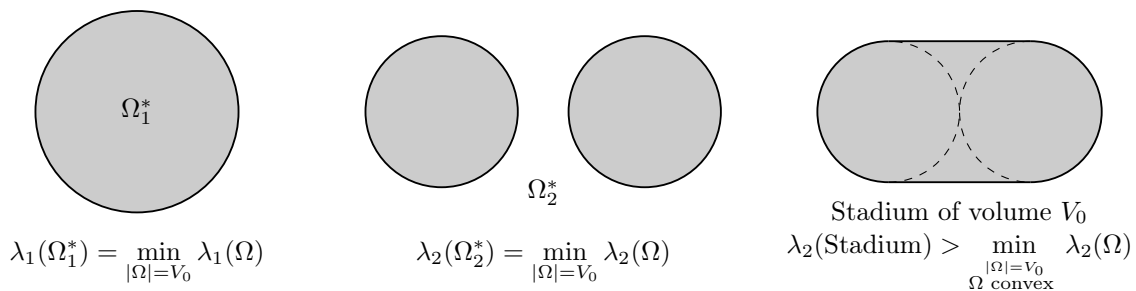

\figinit{0.38pt}
\figpt 0:(0,0)

\figpt 1:(300,0)
\figpt 2:(500,0)

\figpt 3:(740,0)
\figpt 4:(880,0)
\figpt 5:(740,70)
\figpt 6:(880,70)
\figpt 7:(740,-70)
\figpt 8:(880,-70)

\figpt 9:(400,-80)
\figpt 10:(810,-100)

\figpt 11:(0,-150)
\figpt 12:(400,-150)
\figpt 13:(810,-150)

\psbeginfig{}

\psset(fillmode=yes,color=0.8)
\pscirc 0(100)
\pscirc 1(75)
\pscirc 2(75)

\psarccirc 3 ; 70 (90,270)
\psarccirc 4 ; 70 (-90,90)
\psline[5,6,8,7]

\psset(width=0.7)
\psset(fillmode=no,color=0)

\pscirc 0(100)
\pscirc 1(75)
\pscirc 2(75)

\psarccirc 3 ; 70 (90,270)
\psarccirc 4 ; 70 (-90,90)
\psline[5,6]
\psline[7,8]

\psset(dash=8)
\psset(width=0.4)

\psarccirc 3 ; 70 (-90,90)
\psarccirc 4 ; 70 (90,270)

\psendfig
\figvisu{\figBoxA}{}{
\figwritec [0]{$\Omega^*_1$}
\figwritec [9]{$\Omega^*_2$}
\figwritec [10]{Stadium of volume $V_0$}
\figwritec [11]{$\displaystyle{\lambda_1(\Omega^*_1)=\min_{|\Om|=V_0} \lambda_1(\Om)}$}
\figwritec [12]{$\displaystyle{\lambda_2(\Omega^*_2)=\min_{|\Om|=V_0} \lambda_2(\Om)}$}
\figwritec [13]{$\displaystyle{\lambda_2(\textrm{Stadium})>\min_{\stackrel{|\Om|=V_0}{\Om\textrm{ convex }}} \lambda_2(\Om)}$}
}
\centerline{\box\figBoxA}
\caption{Minimization of the first two eigenvalues under volume constraint}
\label{Minlambda}
\end{figure}

We address here the question of the \textbf{regularity} of an optimal shape $\Omega^*$ for problem \eqref{eq:pb}.

The main result of this paper is the following theorem, which gives a negative answer to the open problem 7. of A. Henrot \cite{H}.
\begin{theorem}\label{th:convex}
Let $V_0>0$ and let $\Omega^*\subset\R^2$ be a solution of the minimization problem $(\ref{eq:pb})$,
that is to say an optimal convex set of given area for the second Dirichlet-Laplacian eigenvalue.\\
We assume:
\begin{equation}\label{H2}
\Om^*\textrm{ contains at most a finite number of segments in its boundary.}
\end{equation}
Then
\begin{equation}\label{eq:result}
\Omega^*\textrm{ is }\mathcal{C}^{1,\frac{1}{2}}\textrm{, and }\;\forall\;\eps>0,\;\Omega^*\textrm{ is not }\mathcal{C}^{1,\frac{1}{2}+\eps}.
\end{equation}
\end{theorem}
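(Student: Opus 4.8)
The plan is to localise the regularity question to the finitely many points where $\partial\Om^*$ passes between a \emph{free arc} and a \emph{flat segment}, and then to resolve the local behaviour there by a conformal change of variables. First I would record the first-order optimality conditions. Since the unconstrained minimiser of $\lambda_2$ is the disjoint union of two balls, which is neither connected nor convex, the convexity constraint must be active along part of $\partial\Om^*$; its active set consists of straight segments (zero curvature), while on the complementary \emph{free} part the constraint is inactive and the shape derivative yields an overdetermined condition. Because $\lambda_2(\Om^*)$ is here a multiple eigenvalue, the correct condition bears on the sum over an $L^2$-orthonormal basis $u_1,\dots,u_m$ of the eigenspace, namely
\[
\Sigma:=\sum_{i=1}^m |\nabla u_i|^2 = \Lambda \quad\text{on the free boundary,}
\]
for a constant $\Lambda>0$ built from the Lagrange multipliers (this sum does not vanish at the points where the nodal lines meet $\partial\Om^*$, which is why one must use $\Sigma$ rather than a single eigenfunction). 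On the interior of each free arc the overdetermined system $-\Delta u_i=\lambda_2 u_i$, $u_i=0$, $\Sigma=\Lambda$ forces real-analyticity of the arc (free-boundary regularity for overdetermined problems); the interior of each segment is trivially analytic. By hypothesis \eqref{H2} there are finitely many segments, hence finitely many transition points, so it suffices to prove that $\partial\Om^*$ is exactly $\mathcal{C}^{1,\frac12}$ at each such point.

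Next I would fix a transition point and map a one-sided neighbourhood of it conformally onto the upper half-plane $\H=\{\Im w>0\}$ by $\Phi$, sending the junction to $0$, the segment to the negative real axis and the free arc to the positive real axis. Writing $\Psi:=\log\Phi'$, the curvature of the image of the real axis is $\kappa(x)=|\Phi'(x)|^{-1}\,\Im\Psi'(x)$, so flatness of the segment reads $\Im\Psi'(x)=0$ for $x<0$. Transporting the eigenfunctions, $v_i:=u_i\circ\Phi$ solve $-\Delta v_i=\lambda_2|\Phi'|^2 v_i$ in $\H$ with $v_i=0$ on $\R$, and $|\nabla u_i|=|\Phi'|^{-1}\partial_y v_i$ on the boundary, so the overdetermined condition becomes $\sum_i(\partial_y v_i(x,0))^2=\Lambda\,|\Phi'(x)|^2$ for $x>0$. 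Since the $v_i$ are smooth up to the (analytic) open free arc, this prescribes $\Re\Psi(x)=\tfrac12\log\bigl(\Lambda^{-1}\sum_i(\partial_y v_i(x,0))^2\bigr)$, a smooth function for $x>0$; differentiating tangentially gives a smooth datum for $\Re\Psi'$ there. Hence $\Psi'$ is holomorphic in $\H$ with $\Im\Psi'=0$ prescribed on $x<0$ and $\Re\Psi'$ prescribed and smooth on $x>0$: this is the canonical mixed boundary value problem whose switch of condition type at $0$ generates half-integer singularities.

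The asymptotics of such a mixed problem give $\Psi'(w)=A\,w^{-1/2}+(\text{bounded holomorphic})$ near $0$, and compatibility with $\Im\Psi'=0$ on the negative axis forces $A$ to be purely imaginary; in particular $\Phi'=\exp\Psi$ stays bounded away from $0$ and $\infty$, so there is no corner and $\Phi'\in\mathcal{C}^{0,\frac12}$, i.e. $\partial\Om^*$ is $\mathcal{C}^{1,\frac12}$ at the junction, the curvature blowing up at worst like $\mathrm{dist}^{-1/2}$. Globally this yields the first half of \eqref{eq:result}. For sharpness I would argue by contradiction: if $A=0$, then $\Psi'$, hence $\Phi$, extends analytically across $0$, so the real-analytic identity $\Sigma=\Lambda$ valid for $x>0$ would propagate by unique continuation to the segment side $x<0$; but $\Sigma=\sum_i(\partial_\nu u_i)^2$ is not constant along a genuine straight segment (the normal derivatives vary and vanish at its far endpoints), a contradiction. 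Therefore $A\neq0$, the curvature genuinely behaves like $\mathrm{dist}^{-1/2}$, and $\Om^*$ is not $\mathcal{C}^{1,\frac12+\eps}$ for any $\eps>0$.

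The main obstacles I anticipate are twofold. First, justifying the optimality condition in the degenerate regime (multiple eigenvalue together with an active constraint) and the regularity of $\Phi$ up to a merely $\mathcal{C}^1$ boundary, so that the reduction to a clean mixed problem for $\Psi'$ is legitimate and the expansion in powers of $w^{1/2}$ is rigorous. Second, and most delicate, is the sharpness step: one must guarantee that at least one genuine segment is present (so that a transition point exists at all, which is where \eqref{H2} and the non-optimality of the smooth unconstrained configuration enter) and that the leading coefficient $A$ does not vanish there; the unique-continuation argument above is the natural route, but it hinges on controlling $\Sigma$ near the junction and on the non-constancy of $\Sigma$ along the segment.
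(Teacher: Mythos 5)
Your positive half ($\C^{1,\frac12}$ regularity) follows essentially the same route as the paper: conformal map onto a half-plane, the conjugate pair $\log|\Phi'|$ and $\Arg(\Phi')$, Neumann data $\partial_x\Arg(\Phi')=0$ coming from the segment, Dirichlet data for $\log|\Phi'|$ coming from the overdetermined condition on the free arc, and the half-integer expansion of the mixed problem giving exactly the $r^{1/2}$ behaviour. Two caveats on that half, both of which you flag as ``obstacles'' but which are precisely where the paper does real work. First, the eigenvalue is not multiple: the paper proves (Proposition \ref{prop:HO}, adapting Theorem 5 of \cite{HO}) that $\lambda_2(\Omega^*)$ is \emph{simple}, so the condition is $|\nabla u_2|=\Lambda$ on $\Gamma$; your unweighted-sum condition $\Sigma=\Lambda$ is not a valid first-order condition for a multiple eigenvalue in general (the one-sided derivative of $\lambda_2$ is the least eigenvalue of a matrix, and optimality only yields a convex combination $\sum_i c_i(\partial_n u_i)^2=\Lambda$ with unknown weights), so this step must go through the simplicity result rather than through $\Sigma$. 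Second, the expansion at the corner requires the Dirichlet datum to have Sobolev regularity \emph{up to the junction point}, not merely smoothness on the open arc; this is the content of the paper's fourth step ($|\phi'|\in L^p$ from $\C^1$, $|\phi'|\geq\beta>0$ from convexity, $W^{2,p}$ elliptic regularity of the transported eigenfunction, and Lemma \ref{lem:ln}), and Lemma \ref{lem:dvp1} is then applied to an $L^p$ solution with $p>4$, where uniqueness of the expansion below $H^1$ needs care.

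The genuine gap is in your sharpness argument, which departs from the paper and does not close. The claim that $A=0$ makes $\Psi'$, hence $\Phi$, extend analytically across $0$ is false: the solution of the mixed problem expands in \emph{all} half-integer powers $r^{n+\frac12}\cos\bigl((n+\tfrac12)\varphi\bigr)$, so killing the leading coefficient only removes the $r^{1/2}$ term and leaves the $r^{3/2}$ term, i.e.\ it buys exactly one more order of regularity --- $\log|\Phi'|\in\C^{1,\frac12}$, so $\partial\Omega^*\in\C^{2,\frac12}$ --- and nothing more (this is what the paper's sixth step proves by a bootstrap using the second expansion \eqref{eq:dvp2}). Consequently the unique continuation of $|\nabla u_2|^2=\Lambda^2$ onto the segment side is not available to you. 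Moreover, even granting analyticity, your contradiction is misstated: at the far endpoint of a segment --- which is another junction with $\Gamma$ --- continuity forces $|\nabla u_2|=\Lambda\neq 0$, not $0$; the points where $\nabla u_2$ actually vanishes on $\partial\Omega^*$ are the points where the nodal line of $u_2$ meets the boundary, and these lie in the interiors of segments precisely because $|\nabla u_2|=\Lambda>0$ on $\Gamma$. The paper closes the argument with a separate, non-trivial input that your proposal has no substitute for: Proposition \ref{prop:nonregC2} (Theorem 10 of \cite{HO}), which excludes $\C^{2,\eps}$ regularity of any optimal shape by counting nodal domains of the directional derivative $\partial_x u_2$, with $x$ the direction of a segment met by the nodal line. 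Without that ingredient (or a corrected unique-continuation argument resting on the nodal-line contact points rather than on the segment endpoints, together with a proof of full analytic extension, which $A=0$ does not give), the second half of \eqref{eq:result} remains unproved.
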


\begin{remark}
So far the $\mathcal{C}^1$-regularity of $\Om^*$ was known (see  \cite{B}), which excludes polygons for example. Here, this regularity is improved, and a most surprising part is that $\Om^*$ cannot be more than $\mathcal{C}^{1,\frac{1}{2}}$. More precisely, this ``singularity'' appears exactly at the junction between flat parts and
strictly convex parts of the boundary.
\end{remark}
\begin{remark} {\bf About assumption \eqref{H2}:}
 the boundary of a convex shape contains two specific subsets: on one hand the union of flat parts, and on the other hand the set
 \begin{equation}\label{eq:strict}
 \G:=\{x\in\partial\Om^*\;/\;\exists r>0 \textrm{ such that }B_{r}(x)\cap\Om^*\textrm{ is strictly convex}\}
 \end{equation}
  which is a relatively open subset of $\partial\Om^*$, and which will improperly be called the strictly convex parts of the boundary. We know that the flat parts of $\partial\Om^*$ are not empty, since using an argument on the nodal line of the second eigenfunction in a convex set, it is proven in \cite{HO} that there are at least two segments in the boundary.
  On the other hand, concerning the strictly convex parts of $\partial\Om^*$, it is not clear without assumption \eqref{H2} that this part is nonempty (even if we know that $\Om^*$ is not a polygon: see assumption \eqref{eq:H2} in Proposition \ref{prop:HO} and Remark \ref{rem:segments} where we exhibit a convex $\C^1$-set whose strictly convex parts are empty).\\
Concerning the regularity, each of these specific parts of the boundary is very smooth if $\Om^*$ is optimal (see Proposition \ref{prop:regCinfini}), so the singularity stated in \eqref{eq:result} is localized at junction points between a segment and a strictly convex part. Our analysis is local at these junction points, and this explains the technical assumption \eqref{H2} we made.
Particularly, we also prove in this paper that:
\begin{prop}\label{prop:regCinfini}
Under the assumptions of Theorem \ref{th:convex}, $\partial\Om^*$ is $\mathcal{C}^\infty$, except on a finite number of points, where the regularity is exactly $\C^{1,\frac{1}{2}}$.
\end{prop}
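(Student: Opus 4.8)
The plan is to decompose $\partial\Om^*$ according to the trichotomy already built into the statement and analyze each piece, then close with a counting argument. By the definition \eqref{eq:strict} of $\G$ and assumption \eqref{H2}, the boundary is the disjoint union of (i) finitely many open segments, (ii) the relatively open strictly convex part $\G$, and (iii) the set $J$ of junction points separating them. Since each segment has two endpoints, $J$ is finite. Away from $J$, every boundary point lies either in the interior of a segment or in $\G$; here one uses the already known $\C^1$-regularity of $\partial\Om^*$ (\cite{B}) together with \eqref{H2} to rule out any accumulation of flat pieces, so that $\partial\Om^*\setminus(\text{segments})$ is a finite union of arcs, each strictly convex and hence contained in $\G$.

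On the interior of a segment the boundary is a straight line, so it is trivially $\C^\infty$ (indeed analytic). The substantive regularity statement therefore concerns the strictly convex part $\G$. On $\G$ the convexity constraint is inactive, so $\Om^*$ is a critical point of $\lambda_2$ under the volume constraint alone, and the first-order shape-derivative computation (Proposition \ref{prop:HO}, Section \ref{sect:HO}) yields the overdetermined boundary condition
\begin{equation*}
-\Delta u = \lambda_2 u \ \text{ in } \Om^*, \qquad u = 0 \ \text{ and } \ |\nabla u|^2 = \Lambda \ \text{ on } \G,
\end{equation*}
for the associated second eigenfunction $u$ and a constant $\Lambda>0$ fixed by the Lagrange multiplier. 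On a strictly convex arc $u$ keeps a sign and $|\nabla u|$ stays positive, so the classical regularity theory for Serrin-type overdetermined problems applies: a partial hodograph transform straightens $\G$ locally and turns the system into a nonlinear elliptic boundary value problem with analytic data, whence $\G$ is $\C^\infty$ (in fact real-analytic) by a Kinderlehrer--Nirenberg bootstrap.

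It remains to treat the finite set $J$. These are exactly the points at which Theorem \ref{th:convex} localizes the loss of regularity: the local analysis producing \eqref{eq:result} shows that at each point of $J$ the boundary is $\C^{1,\frac{1}{2}}$ but not $\C^{1,\frac{1}{2}+\eps}$ for any $\eps>0$, so the regularity there is exactly $\C^{1,\frac{1}{2}}$. Combining the three cases, $\partial\Om^*$ is $\C^\infty$ off the finite set $J$ and exactly $\C^{1,\frac{1}{2}}$ on $J$, which is the assertion.

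The main obstacle is the second step. One must justify rigorously that the overdetermined condition on $\G$ forces $\C^\infty$ regularity, and this requires knowing a priori that $|\nabla u|$ is bounded away from zero along $\G$, so that the hodograph transform is nondegenerate, as well as controlling the possible multiplicity of $\lambda_2$ when differentiating. The genuinely delicate point is the interaction with the convexity constraint: one must make precise that the constraint is inactive exactly on $\G$, so that the free (Serrin) optimality condition holds there rather than a one-sided variational inequality. This is presumably where the hypotheses gathered in Proposition \ref{prop:HO} enter.
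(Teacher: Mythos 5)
Your proposal is correct in outline and follows the same skeleton as the paper's own (very terse) proof: split $\partial\Om^*$ into the finitely many segments, the strictly convex part $\G$, and the finite set $J$ of junction points; segments are trivially smooth; the substantive smoothness claim is on $\G$; and the exact $\C^{1,\frac{1}{2}}$ behaviour at $J$ is imported from Theorem \ref{th:convex}. The one genuinely different ingredient is your treatment of $\G$: you invoke the classical Kinderlehrer--Nirenberg hodograph-transform theory for the overdetermined condition $|\nabla u_2|=\Lambda$ on $\G$, whereas the paper deliberately routes this step through its own Proposition \ref{prop:reg-over}, a bootstrap on the conformal parametrization ($|\phi'|\in W^{1,p}$ near $\phi^{-1}(\G)$ improves the regularity of $\widehat{u_2}$, which through $|\nabla\widehat{u_2}|=\Lambda|\phi'|$ improves $\log|\phi'|$, hence $\G$, and so on). Both routes are legitimate, and the paper itself says so in Remark \ref{rem:reg-over} (citing \cite{KN,V}): your route is the classical one, works in any dimension, and even yields analyticity of $\G$ since $f(u)=\lambda_2 u$ is analytic; the conformal proof is two-dimensional only but self-contained in the machinery already set up for the junction analysis, which is why the author includes it. Note also that two of the ``obstacles'' you list at the end are already disposed of by the paper's Proposition \ref{prop:HO}: simplicity of $\lambda_2(\Om^*)$ and the validity of the free optimality condition on $\G$ are exactly its conclusions, and the nondegeneracy $|\nabla u_2|\neq 0$ along $\G$ is automatic because $|\nabla u_2|=\Lambda>0$ \emph{is} the overdetermined condition.

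One step deserves more care, in your write-up and in the paper alike: the assertion that the regularity is exactly $\C^{1,\frac{1}{2}}$ at \emph{every} point of $J$. The negative part of Theorem \ref{th:convex} is global: its proof assumes $\Om^*\in\C^{1,\frac{1}{2}+\eps}$ everywhere, deduces $a_0=0$ at every junction, concludes $\Om^*\in\C^{2,\frac{1}{2}}$, and contradicts the global non-regularity statement of Proposition \ref{prop:nonregC2}. As written, this only shows that the singular coefficient $a_0$ is nonzero at \emph{some} junction point; a priori another junction could have $a_0=0$ and be, say, exactly $\C^{2,\frac{1}{2}}$ there, which would falsify the pointwise statement of the Proposition. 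To close this, one must run the dichotomy locally (at a given junction, either $a_0\neq0$, hence exactly $\C^{1,\frac{1}{2}}$, or the boundary is $\C^{2,\frac{1}{2}}$ near that junction) and then exclude the second alternative junction by junction, by localizing the nodal-domain counting argument of Henrot--Oudet (cf.\ Remark \ref{rem:nonregC2} and \cite{HO}). Since the paper elides exactly the same point, this is not a defect of your proposal relative to the paper's proof, but writing that exactness at each point of $J$ ``follows from the local analysis producing \eqref{eq:result}'' glosses over a real step.
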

This is a consequence of Proposition \ref{prop:reg-over} and Theorem \ref{th:convex}. The regularity of the strictly convex parts is not new, and one can even have piecewise analyticity of the boundary, see \cite{KN,V} and Remark \ref{rem:reg-over}. However we give in this paper a new proof of the $\C^\infty$-regularity of these strictly convex parts (Proposition \ref{prop:reg-over}) to show the efficacy of our strategy (in dimension 2 only).
We discuss again this assumption \eqref{H2} in Remark \ref{rem:segments} and Remark \ref{rem:nonregC2}.\\
\end{remark}

There are three main steps in the proof of Theorem \ref{th:convex}:
\begin{itemize}
\item the first one is classical and uses \cite{HO}: writing optimality condition for \eqref{eq:pb}, one prove that any second eigenfunction in $\Om^*$ (an optimal set for this problem), is solution of a so-called partially overdetermined problem:
\begin{equation}
\left\{\begin{array}{ccll}
-\Delta u_2 &=& \lambda_2(\Om^*) u_2 & in\;\Omega^*\\
u_2&=&0& on\;\partial\Omega^*\\
|\nabla u_2| &=& C^{st}=\Lambda>0& on\; \Gamma,
\end{array}
\right.
\end{equation}
where $\G$ denotes the strictly convex parts of the boundary (in the sense of \eqref{eq:strict});
\item the second step, which is the main contribution of this paper, is to analyze the regularity of the junction between $\Gamma$ and $\partial\Om^*\setminus\Gamma$; we show that this regularity is either $\mathcal{C}^{1,\frac{1}{2}}$ or $\mathcal{C}^{2,\frac{1}{2}}$,
\item the third step is to prove that $\Om^*$ cannot be more than $\mathcal{C}^2$, using an result mainly due to Henrot and Oudet in \cite{HO}, see Proposition \ref{prop:nonregC2}.
\end{itemize}

In the following section, we remind some results of A. Henrot and E. Oudet from \cite{HO}, which lead to the optimality condition for problem \eqref{eq:pb}, then we prove Theorem \ref{th:convex} in section \ref{sect:convex}. In  the last section we give some comments on the spirit of the proof, which goes beyond this specific optimization problem ; thus we state a few other regularity results, and we apply these ones to some other shape optimization problems.

\section{First order optimality condition}\label{sect:HO}

In order to prove Theorem \ref{th:convex}, we want to write optimality conditions for problem \eqref{eq:pb}. We adapt the proofs given in \cite{HO} to get:
\begin{prop}\label{prop:HO}[Henrot-Oudet]
Let $\Omega^*$ be a solution of problem $(\ref{eq:pb})$, and let $u_2$ be one second eigenfunction in $\Omega^*$. 
 We assume that:
 \begin{equation}\label{eq:H2}
\partial\Om^*\textrm{ contains at least one nonempty relatively open strictly convex part (in the sense of \eqref{eq:strict}).}
 \end{equation}
 Then,
\begin{itemize}
\item
$\lambda_2(\Omega^*)$ is simple,
\item
we have an optimality condition on the strictly convex parts $\G$ of $\partial\Om^*$:
\begin{equation}\label{eq:EL}
|\nabla u_2|_{|\G}=\Lambda:=\sqrt{\frac{\lambda_2(\Omega^*)}{|\Omega^*|}}>0.
\end{equation}
\end{itemize}
\end{prop}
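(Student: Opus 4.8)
The plan is to run the classical Hadamard shape calculus against the special geometry of the convexity constraint, adapting the arguments of \cite{HO}. First I would recall that for a \emph{simple} eigenvalue $\la_2(\Om^*)$ with $L^2$-normalized eigenfunction $u_2$, the map $t\mapsto\la_2((I+tV)(\Om^*))$ is differentiable and Hadamard's formula reads
$$\left.\frac{d}{dt}\la_2\big((I+tV)(\Om^*)\big)\right|_{t=0}=-\int_{\partial\Om^*}|\nabla u_2|^2\,(V\cdot n)\,d\sigma,$$
while the derivative of the area is $\int_{\partial\Om^*}(V\cdot n)\,d\sigma$; here $n$ is the outer normal and, since $u_2=0$ on $\partial\Om^*$, one has $|\nabla u_2|=|\partial_n u_2|$ there, so all the information is carried by the normal derivatives. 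Because simplicity is exactly what makes this formula available, it must be established first.

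For simplicity I would argue by contradiction, adapting \cite{HO}. Suppose $\la_2(\Om^*)=\la_3(\Om^*)$, so the eigenspace $E$ is at least two-dimensional; fix an orthonormal basis $\phi_1,\phi_2$. Under a deformation $V$ the cluster $\{\la_2,\la_3\}$ moves, to first order, according to the eigenvalues of the symmetric matrix
$$M(V)=-\int_{\partial\Om^*}\begin{pmatrix}(\partial_n\phi_1)^2 & \partial_n\phi_1\,\partial_n\phi_2\\ \partial_n\phi_1\,\partial_n\phi_2 & (\partial_n\phi_2)^2\end{pmatrix}(V\cdot n)\,d\sigma,$$
and minimality forces the smallest eigenvalue of $M(V)$ to be $\ge0$ for every admissible (convexity- and volume-preserving) $V$. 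The key point is that on the nonempty, relatively open strictly convex part $\G$ guaranteed by \eqref{eq:H2} one may move the boundary \emph{in both normal directions}, so any $g$ with $\int_\G g\,d\sigma=0$ is an admissible normal speed. Since $\phi_1,\phi_2$ are linearly independent, unique continuation prevents $(\partial_n\phi_1,\partial_n\phi_2)$ from being everywhere aligned along $\G$; one can then choose such a $g$ making $\int_\G A\,g\,d\sigma$ positive definite, i.e. $M(V)$ negative definite, which strictly lowers both $\la_2$ and $\la_3$ while preserving area and convexity, contradicting optimality. Hence $\la_2(\Om^*)$ is simple.

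With simplicity in hand I would read off the overdetermined condition. For a field $V$ supported in $\G$, strict convexity again makes every normal speed $g=V\cdot n$ two-sidedly admissible, the volume constraint amounting to $\int_\G g\,d\sigma=0$; stationarity of $\la_2$ then gives $\int_\G |\nabla u_2|^2\,g\,d\sigma=0$ for all such $g$, forcing $|\nabla u_2|^2\equiv c$ on $\G$ for some constant $c$. To identify $c$ I would test with the dilation $V_1(x)=x$, which is convexity preserving in both directions and satisfies $\la_2((1+t)\Om^*)=(1+t)^{-2}\la_2(\Om^*)$ and $|(1+t)\Om^*|=(1+t)^2|\Om^*|$, so its two shape derivatives are $-2\la_2(\Om^*)$ and $2|\Om^*|$ (the first being the Pohozaev identity $\int_{\partial\Om^*}|\nabla u_2|^2(x\cdot n)\,d\sigma=2\la_2(\Om^*)$ for the normalized $u_2$). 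Correcting $V_1$ by a $\G$-supported field $V_2$ with $\int_\G V_2\cdot n\,d\sigma\neq0$, the field $W=V_1+sV_2$ is area preserving for the appropriate $s$ and still convexity preserving, hence two-sided, so $\frac{d}{dt}\la_2(W)|_{0}=0$ yields $-2\la_2(\Om^*)+2c\,|\Om^*|=0$; therefore $c=\la_2(\Om^*)/|\Om^*|$ and $|\nabla u_2|_{|\G}=\Lambda=\sqrt{\la_2(\Om^*)/|\Om^*|}$.

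The hard part will not be the formal computation but making the convexity-constrained perturbation analysis rigorous: one must verify that each deformation used genuinely keeps $\Om^*$ convex and, where required, is two-sided, that the strictly convex part $\G$ really affords this two-sided freedom, and, above all, that $\la_2(\Om^*)$ is simple, since without simplicity the differentiable shape calculus degenerates into the nonsmooth, matrix-valued derivative of the preceding paragraph.
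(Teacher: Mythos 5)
Your overall strategy---Hadamard shape calculus, perturbations localized on the strictly convex part $\G$, unique continuation, and identification of the multiplier by a dilation/Pohozaev argument---is exactly the machinery behind the Henrot--Oudet results that the paper itself does not re-prove: its proof of this proposition consists of invoking Theorems 5 and 7 of \cite{HO}, explaining that their $\C^{1,1}$ hypothesis can be replaced by the $H^2(\Om^*)$ regularity of eigenfunctions on convex domains (so that the shape derivatives still make sense), and pointing out where hypothesis \eqref{eq:H2} enters. So you are in effect re-proving the cited results, which is legitimate, but your reconstruction has two real gaps. First, in the simplicity argument, the step ``non-alignment of $(\partial_n\phi_1,\partial_n\phi_2)$ implies one can choose $g$ with $\int_\G g\,d\sigma=0$ making $\int_\G A\,g\,d\sigma$ positive definite'' is false: the trace of $\int_\G A\,g\,d\sigma$ equals $\int_\G\left((\partial_n\phi_1)^2+(\partial_n\phi_2)^2\right)g\,d\sigma$, so whenever $(\partial_n\phi_1)^2+(\partial_n\phi_2)^2$ happens to be constant along $\G$ (as it is, e.g., for the degenerate pair on a disk, where non-alignment nevertheless holds), this trace vanishes for every admissible $g$, and a positive definite matrix cannot have zero trace. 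The repair is to require much less: since $\la_2$ is the bottom of the cluster, it suffices that $M(V)$ have \emph{one} negative eigenvalue. Any $g$ with $\int_\G A\,g\,d\sigma\neq 0$ achieves this (replacing $g$ by $-g$ if necessary); and if $\int_\G A\,g\,d\sigma=0$ for all admissible $g$, then all three entries of $A$ are constant on $\G$, so a suitable combination $\psi=c_2\phi_1-c_1\phi_2$ has vanishing Cauchy data on $\G$, and unique continuation forces $\psi\equiv 0$, the desired contradiction. That degenerate case, not non-alignment, is where unique continuation must actually be used.

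Second, and more seriously, the claim invoked at every stage that strict convexity makes every normal speed supported in $\G$ ``two-sidedly admissible'' is unjustified, and is in fact false for deformations of the form $(I+tV)(\Om^*)$: strict convexity in the sense of \eqref{eq:strict} does not mean uniform convexity, and if $\G$ is locally the graph of $y=x^4$, adding $t\cdot(\textrm{bump})$ destroys convexity for one sign of $t$, however small $|t|$ is. This is precisely the delicate point behind hypothesis \eqref{eq:H2}; the perturbations actually used in \cite{HO} are not flows of smooth vector fields but cuts along chords and additions of small caps bounded by displaced supporting lines, which remain convex and are localized exactly because the contact point is strictly convex (alternatively one must pass to convex hulls and show the resulting area defect is $o(t)$). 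You explicitly flag this as ``the hard part'' in your closing paragraph but do not address it; since simplicity, the constancy of $|\nabla u_2|$ on $\G$, and the identification of $\Lambda$ all rest on this two-sidedness, it is a genuine gap rather than a deferred technicality. Two minor points: your deformations preserve the volume only to first order (harmless, fix by rescaling), and Hadamard's formula on a merely convex, hence only Lipschitz, domain requires $u_2\in H^2(\Om^*)$---which is exactly the patch the paper makes by citing \cite{G}.
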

\begin{proof}
We first apply Theorem 5 in \cite{HO}, which asserts that $\la_2(\Om^*)$ is simple when $\Om^*$ is an optimal shape for \eqref{eq:pb}. The authors make a regularity assumption on $\Om^*$, namely the $\mathcal{C}^{1,1}$-regularity of the boundary. However, this technical assumption
can easily be avoided in their proof of Lemma 1 in \cite{HO}, which is the main tool of the proof of their Theorem 5 we are interested in: to see this, the main remark is that, thanks to the convexity of $\Om^*$, we know that the second eigenfunctions in $\Om^*$ belongs to $H^2(\Om^*)$ (see \cite{G} for instance), and so their normal derivatives are well defined in $H^{\frac{1}{2}}(\partial\Om^*)$ in the sense of trace on $\partial\Om^*$; this allows the computations of the directional derivatives of $\la_2$ used in the proof of Lemma 1 in \cite{HO}. Nevertheless, this part of the proof uses the assumption \eqref{eq:H2}, even if this one is not specified in \cite{HO} (see Remark \ref{rem:segments} below): indeed, they need the existence of a strictly convex part to perturbe the optimal shape around this part, and then write optimality.

We now apply the first part of Theorem 7 in \cite{HO} which gives equation \eqref{eq:EL}; once again, this result does not need any regularity assumption on $\Om^*$, since the $H^2$-regularity of the second eigenfunction is enough to write the shape derivative of the shape functional $\la_2$.\qed
\end{proof}
\begin{remark}\label{rem:segments}
The hypothesis \eqref{eq:H2} is not specified in \cite{HO}, but this one is implicitly used in the proof of Lemma 1  in \cite{HO} (and so this hypothesis is also needed for their Theorem 5 which is a direct consequence of this lemma). We point out that this property \eqref{eq:H2} is not satisfied by a general convex set,
 even assumed to be $\mathcal{C}^{1,1}$ like in \cite{HO}. In order to convince the reader of the existence of such a ``singular'' set,
  let us take a one-dimensional function $f$ such that $f''=\chi_\omega$, where $\omega$ is a closed subset of $\R$ with positive measure, and with an empty interior. Then the graph of $f$ is convex and $\mathcal{C}^{1,1}$, but
  there is an infinite number of segments in the boundary,
 and these ones even form a dense subset in the whole boundary. That way we can build an open bounded convex $\C^{1,1}$ set $\Om$, such that the strictly convex part of the boundary in the sense of \eqref{eq:strict} is empty.
 
 This technical difficulty is due to the convexity constraint: it is difficult to exclude such a singular set $\Om$ from optimality, because it is hard to write optimality conditions around a set with a priori such poor regularity. Indeed, most of the perturbations of this shape becomes non-convex (and so are not admissible). Roughly speaking, this set saturates the convexity constraint almost everywhere; hypothesis \eqref{eq:H2} demands that the optimal shape do not saturate the convexity constraint on a nonempty part of the boundary.

Hypothesis \eqref{H2} is even stronger and requires that there is a finite number of alternation between saturated and non-saturated parts. Nevertheless, we do not know any proof that \eqref{H2} nor \eqref{eq:H2} is satisfied by an optimal shape for \eqref{eq:pb} (it is announced in \cite{HO} that there are only two segments in the boundary, but it seems that the proof is incomplete).
\end{remark}

\section{Proof of Theorem \ref{th:convex}}\label{sect:convex}


\noindent
{\bf Outline of the proof:} On strictly convex parts $\G$ of the boundary $\partial\Om^*$, we have the analytic equality $|\nabla u_2|_{|\G}=\Lambda$; on the complementary part of the boundary, we have segments, which is a geometric information. We want to prove that these two informations imply that the regularity of the junction between strictly convex parts and segments is either $\mathcal{C}^{1,\frac{1}{2}}$, or $\mathcal{C}^{2,\frac{1}{2}}$. To this end, we use the conformal parametrization $\phi$ of the set $\Om^*$ which has the same H\"older-regularity as the shape, and we prove that our analytical and geometrical informations give respectively on each side a regularity property on the Dirichlet and the Neumann boundary conditions, for the harmonic function $\log(|\phi'|)$ (the regularity of this function also characterizes the regularity of the shape); we then apply a result about mixed boundary problem (Lemma \ref{lem:dvp1}), which asserts that such a situation can only be satisfied when this function is either $\mathcal{C}^{0,\frac{1}{2}}$ or $\mathcal{C}^{1,\frac{1}{2}}$;
the shape is therefore $\mathcal{C}^{1,\frac{1}{2}}$ or $\mathcal{C}^{2,\frac{1}{2}}$. This last possibility is excluded by Proposition \ref{prop:nonregC2} below, taken from \cite{HO}.\\

\noindent\textbf{Proof of Theorem \ref{th:convex}:} A priori, we know that $\Om^*$ is necessarily of class $\C^1$ (see \cite{B}).\\

\noindent{\bf First step. Euler-Lagrange equation:}

Let $\Om^*$ be one solution of \eqref{eq:pb}. We can use Proposition \ref{prop:HO}, and so there exists one constant $\Lambda>0$ such that: 
\begin{equation}\label{EL}
 |\nabla u_2|_{|\G}=\Lambda,
\end{equation}
where $u_2$ is a normalized second eigenfunction and $\G\subset\partial\Om^*$ denotes the strictly convex parts of $\partial\Om^*$. We want to deduce from \eqref{EL} that $\Om^*$ is $\C^{1,\frac{1}{2}}$.

As we assume there is a finite number of segments, we can work locally around the intersection of a strictly convex part $\gamma_-\subset\G$, and a straight line $\gamma_+\subset\partial\Om^*\setminus\G$. 

So we focus on the following geometrical situation (see Figure \ref{conforme}):
\begin{itemize}
\item $\gamma_-\subset\partial\Om^*, \gamma_+\subset\partial\Om^*$, and $\overline{\gamma_-}\cap\overline{\gamma_+}$ is reduced to one point denoted by $A$,
\item $\gamma_-$ is strictly convex (in the geometrical sense \eqref{eq:strict}),
\item $\gamma_+$ is a segment.
\end{itemize}
We remind that \eqref{EL} implies that $u_{2}\in \C^1(\Om\cup\G)$ (since $\nabla u_{2}$ is in $H^1$ with a continuous trace on $\G$), and also that
$\G$ is very regular (see proposition \ref{prop:reg-over}), so we just need to analyze the regularity around $\overline{\G}\cap\overline{\partial\Om^*\setminus\G}$, composed by a finite number of points like $A$ here (strictly speaking, we do not need this result here, and actually this one could be a consequence of the proof given here, but we prefer to focus on the new part of the result, that is to say the regularity around $A$, and we put the emphasis of the regularity of $\G$ in section \ref{ssect:reg2} for the interested reader).
\\

\noindent{\bf Second step. Transport on a smooth domain:}

We introduce the conformal parametrization of $\Om^*$: the Riemann mapping theorem (see \cite{P} for example) asserts the existence of a biholomorphic function \linebreak$\phi : \H\rightarrow\Om^*$, where $\H=\{z\in\CC;\; Im(z)<0\}$ (where $Im$ denotes the imaginary part of the complex number $z$).
Moreover, from a result due to Caratheodory, we know that $\phi$ continuously extends to an homeophormism between the closures of $\H$ and $\Om^*$. Finally we can choose $\phi(0)=A$, the intersection point.

We set $J_+:=\phi^{-1}(\gamma_+)\subset\R=\partial\H$, $J_-:=\phi^{-1}(\gamma_-)$, and we can choose
$\mathcal{V}$ a bounded semi-neighborhood of 0 in $\H$ such that $\partial\mathcal{V}\cap\R\subset \overline{J_-}\cup\overline{J_+}$ (see Figure \ref{conforme}).
\begin{figure}[!ht]
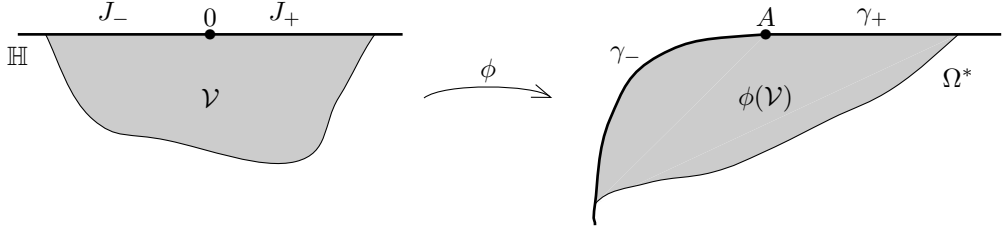


\figinit{0.8pt}
\figpt 0:(-215,20) 
\figpt 1:(-220,0)
\figpt 2:(-207,0)
\figpt 3:(-185,-40)
\figpt 4:(-150,-50)
\figpt 5:(-90,-60)
\figpt 6:(-70,-30)
\figpt 7:(-53,0)
\figpt 8:(-40,0)
\figpt 9:(-35,20) 

\figpt 10:(-130,0)
\figpt 11:(-175,0)
\figpt 12:(-95,0)
\figpt 13:(-130,-30)


\figpt 20:(130,0)
\figpt 21:(220,0)
\figpt 33:(240,0)
\figpt 22:(80,-10)
\figpt 23:(57,-40)
\figpt 24:(50,-80)
\figpt 34:(50,-90)
\figpt 25:(50,-100) 

\figpt 26:(45,-105) 
\figpt 27:(75,-70)
\figpt 28:(105,-65)
\figpt 29:(130,-55)
\figpt 30:(160,-40)
\figpt 31:(190,-25)
\figpt 32:(230,10) 

\figpt 35:(180,0)
\figpt 36:(130,-30)

\figpt 40:(-30,-30) 
\figpt 41:(-20,-23)
\figpt 42:(20,-23)
\figpt 43:(30,-30) 
\figpt 44:(0,-17) 

\psbeginfig{}
\psarrowBezier [40,41,42,43]

\psset(fillmode=yes,color=0.8)
\pscurve[0,2,3,4,5,6,7,9]
\psset(fillmode=no,color=0)
\psset(width=1)
\psline[1,8]
\psset(width=\defaultwidth)
\pscurve[0,2,3,4,5,6,7,9]

\psset(fillmode=yes,color=0.8)
\psline[20,21,24]
\pscurve[21,20,22,23,24,34]\pscurve[26,24,27,28,29,30,31,21,32]
\psset(fillmode=yes,color=1)
\pscurve[26,24,27,28]
\psset(fillmode=no,color=0)
\pscurve[26,24,27,28,29,30,31,21,32]
\psset(width=1)
\psline[20,33]
\pscurve[21,20,22,23,24,34,25]

\psendfig
\figvisu{\figBoxA}{}{
\figwriten 10:$0$(4)
\figwriten 11:$J_-$(4)
\figwriten 12:$J_+$(4)
\figwritec[13]{$\mathcal{V}$}
\figwrites 1:$\H$(6)
\figwriten 20:$A$(4)
\figwritew 22:$\gamma_-$(8)
\figwriten 35:$\gamma_+$(4)
\figwritec[36]{$\phi(\mathcal{V})$}
\figwritec[44]{$\phi$}
\figwrites 21:$\Om^*$(16)
\figsetmark{$\figBullet$}
\figwritep[10,20]
}
\centerline{\box\figBoxA}
\caption{Conformal parametrization}
\label{conforme}
\end{figure}

We can now transport the Dirichlet problem, initially settled in $\Om^*$, in the new domain $\H$ (which is smooth). Therefore, we put the ``unknown'' $\Om^*$ inside the equations: we set $\;\widehat{\cdot}\;$ the composition by $\phi$, and we get, since $\phi$ is holomorphic :
\begin{equation}\label{eq:over}
\left\{\begin{array}{ccll}
-\Delta \widehat{u_2} &=& \lambda_2(\Om^*) |\phi'|^2\widehat{u_2} & in\;\H,\\
\widehat{u_2}&=&0& on\;\partial\H,
\end{array}
\right.
\end{equation}\\
{\bf Third step. Regularity of $\phi$, from the regularity of $\Om^*$:}

We have to reformulate the question of the regularity of $\Om^*$ on $\phi$. We have the following lemma, available for the conformal parametrization of any convex $\C^1$ set:
\begin{lemma}\label{lem:conforme}If a open set $\Om$ is $\mathcal{C}^1$ (and simply connected), then its conformal parametrization $\phi$ satisfies:
\begin{itemize}
\item $\Arg(\phi')$ is defined and continuous on $\overline{\H}$,
\item for every $p\in [1,\infty)$, $|\phi'|\in L^p(\mathcal{V})$, for all $\mathcal{V}$ bounded subset of $\H$,
\end{itemize}
If a open set $\Om$ is convex, then its conformal parametrization $\phi$ satisfies:
\begin{itemize}
\item for all $\mathcal{V}$ bounded subset of $\H$, there exists $\beta>0$ such that $|\phi'|\geq\beta$ in $\mathcal{V}$.
\end{itemize}
\end{lemma}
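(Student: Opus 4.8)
The plan is to establish each of the three assertions of Lemma \ref{lem:conforme} separately, relying on the standard dictionary between boundary regularity of a simply connected domain and boundary behavior of its Riemann map, as developed in Pommerenke's book \cite{P}.

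For the first two items, under the $\C^1$ hypothesis, I would begin with the observation that a domain is $\C^1$ precisely when the boundary tangent direction varies continuously. Parametrizing $\partial\H=\R$ by arclength of the image through $\phi$, the argument $\Arg(\phi'(x))$ for $x\in\R$ encodes exactly the direction of the tangent to $\partial\Om$ at $\phi(x)$; hence the $\C^1$-regularity of $\Om$ translates into continuity of $\Arg(\phi')$ on $\R$. The key analytic input is that $\log\phi'=\log|\phi'|+i\Arg(\phi')$ is holomorphic on $\H$ (well-defined since $\phi'$ does not vanish on the simply connected $\H$), so $\Arg(\phi')$ is harmonic; once its boundary values on $\R$ are continuous, the Poisson representation gives continuity on all of $\overline{\H}$, which is the first item. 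For the second item, I would use the connection between the modulus of the derivative and the growth of a conformal map: since $\Arg(\phi')$ has a continuous (hence bounded) boundary trace with oscillation controlled by the turning of the tangent, the conjugate harmonic function $\log|\phi'|$ lies in every $\mathrm{BMO}$-type or Hardy-type class that yields $|\phi'|\in L^p_{\mathrm{loc}}$ for all finite $p$. Concretely, I expect to invoke the fact that for a domain with continuously turning tangent the derivative $\phi'$ belongs to the Hardy space $H^p$ on bounded pieces for every $p<\infty$ (a consequence of the Smirnov/Keldysh–Lavrentiev theory for domains with rectifiable, and here $\C^1$, boundary), giving the stated local $L^p$ bound on $\mathcal{V}$.

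For the third item I would exploit convexity directly rather than through the harmonic conjugate. Convexity of $\Om$ means that along the boundary the tangent direction $\Arg(\phi')$ is monotone, turning by a total of $2\pi$; more importantly, it forbids the image from having outward cusps, so the conformal map cannot be locally degenerate from the interior. The cleanest route is to fix a bounded $\mathcal{V}\subset\H$ and a compact reference point, then argue by contradiction: if $|\phi'|$ were not bounded below on $\mathcal{V}$, there would be a sequence $z_n\in\mathcal{V}$ with $\phi'(z_n)\to 0$; using the Koebe distortion estimates together with the fact that $\Om$ contains a fixed interior ball (so that the distance from $\phi(z)$ to $\partial\Om$ is comparable to $|\phi'(z)|\,\mathrm{dist}(z,\R)$), a vanishing derivative at interior points forces the image boundary to pinch, contradicting convexity. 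Thus $|\phi'|\geq\beta>0$ on every bounded $\mathcal{V}$.

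The main obstacle will be the second item, the local $L^p$ integrability for all $p<\infty$, because it is the only statement that is genuinely quantitative rather than qualitative: continuity of the tangent ($\C^1$) must be upgraded into integrability of the derivative to high powers, and for a bare $\C^1$ domain one does not have the $\alpha>0$ H\"older modulus that would make $\phi'$ bounded. The delicate point is that $\log|\phi'|$ need not be bounded, yet its boundary oscillation is controlled, so I expect to lean on the John–Nirenberg inequality (via the fact that the harmonic conjugate of a bounded function is in $\mathrm{BMO}$, whose exponential integrability yields $e^{p\log|\phi'|}=|\phi'|^p$ locally integrable for every $p$) rather than on any pointwise bound. By contrast, the first and third items reduce to standard boundary-correspondence and distortion arguments and should follow quickly.
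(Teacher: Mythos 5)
Your sketch of the first item is essentially the standard argument (and the one behind the reference the paper cites, Pommerenke's Theorem 3.2): interpret $\Arg(\phi')$ as the tangent angle, use harmonicity and the Poisson representation. Note, though, that the genuinely hard step there is the one you assert rather than prove, namely that $\Arg(\phi')$ has boundary values at all and that they coincide with the tangent angle (Lindel\"of's theorem); the paper sidesteps this by citation, and so, implicitly, do you. The real problems are in your treatment of the other two items. For the $L^p$ bound, the implication you lean on --- ``the harmonic conjugate of a bounded function is in $\mathrm{BMO}$, whose exponential integrability yields $e^{p\log|\phi'|}=|\phi'|^p$ locally integrable for every $p$'' --- is false as stated. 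John--Nirenberg gives $\int e^{\lambda|f-f_Q|}<\infty$ only for $\lambda$ small compared with $1/\|f\|_{\mathrm{BMO}}$; for instance $\log|z|\in \mathrm{BMO}(\R^2)$ while $|z|^{-p}$ fails to be locally integrable for $p\geq 2$. So membership in $\mathrm{BMO}$ alone can never give \emph{every} finite exponent. The missing idea is to exploit the continuity (not mere boundedness) of the tangent angle: by uniform continuity, the oscillation of $\Arg(\phi')$ on a small boundary interval is smaller than any prescribed $\eps$, and Zygmund's theorem on conjugate functions (if $|u|\leq\eps$ then $e^{\lambda|\tilde u|}$ is integrable for $\lambda<\pi/(2\eps)$) then gives local integrability of $|\phi'|^{\pm p}$ for $p$ as large as desired, choosing the interval according to $p$. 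You do mention ``controlled oscillation,'' but the quantified small-oscillation step is exactly what turns the argument from false into correct, and it is absent.

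Your third item also does not close. On a bounded $\mathcal{V}\subset\H$, the only sequences that matter are those with $\mathrm{dist}(z_n,\R)\to 0$: if $\mathrm{dist}(z_n,\R)\geq\delta>0$, a limit point would be an interior zero of $\phi'$, which is impossible for \emph{any} conformal map, convex image or not. But for sequences approaching $\R$, the Koebe comparison $\mathrm{dist}(\phi(z_n),\partial\Om)\asymp|\phi'(z_n)|\,\mathrm{dist}(z_n,\R)$ merely says that both sides tend to zero, and no contradiction with convexity (or with $\Om$ containing a fixed ball) appears; indeed, for a general $\C^1$ non-convex domain, $|\phi'|$ need \emph{not} be bounded below near the boundary, so no argument using convexity only through soft statements like ``no pinching'' or ``no outward cusps'' can possibly work. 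The correct proof --- and the content of the references the paper points to (Pommerenke, Ex.\ 3.6.1, and Mannino's Lemma 1) --- is the distortion theorem for convex univalent functions: convexity of the image is equivalent to $\mathrm{Re}\left(1+z f''(z)/f'(z)\right)>0$ on the disk, and integrating the resulting Herglotz-type bounds yields $|f'(z)|\geq (1+|z|)^{-2}\geq 1/4$ when $f'(0)=1$; transporting this by a M\"obius map from the disk to $\H$ gives the lower bound on bounded subsets. Convexity thus enters through a quantitative analytic characterization, not through a compactness argument, and this is the second genuine gap in your proposal.
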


We refer to \cite[Th 3.2]{P} and \cite[Ex 15 page 71]{GM05Har} for the first part. We refer to \cite[Ex 3.6.1 page 70]{P} for the second one (see also Lemma 1 in \cite{Ma} and references therein).
\begin{remark}
The regularity of $\Arg(\phi')$ is easy to understand since it is a parametrization of the angle of the tangent vector to the boundary of $\partial\Om^*$: indeed, $\phi_{|\R}:\R\to\partial\Om^*$ is a one-to-one parametrization of the boundary $\partial\Om^*$, and the tangent vector is given by $\frac{\phi'(t)}{|\phi'(t)|}=e^{i\Arg(\phi'(t))}$. Therefore $\Arg(\phi')$ is harmonic (as the imaginary part of the holomorphic function $\log(\phi')$) and has a continuous trace on the boundary and so is itself continuous up to the boundary.

Nevertheless, it is not true in general that this implies that $|\phi'|$ is continuous as well. The $L^p$ regularity of $|\phi'|$ is a consequence of results on conjugate functions: $\log(|\phi'|)$ and $Arg(\phi')$ are the real and imaginary parts of the same holomorphic function and so they are called conjugate, and it is well-known that their regularity are linked. Actually, it is possible to prove that $\phi'$ is in the Hardy space $H^p$, which is included in $L^p$.
\end{remark}

The situation is simpler about H\"older-regularity with non-integer exponents. 
\begin{lemma}[Kellog-Warschawski]\label{lem:conf}
Let $n\in\N$, $\beta\in(0,1)$, and $\gamma$ a relatively open subset of $\partial\Om^*$. Then
$$\log|\phi'|\textrm{ is }\C^{n,\beta}\textrm{ on }\phi^{-1}(\gamma) \Longleftrightarrow\Arg(\phi')\textrm{ is }\C^{n,\beta}\textrm{ on }\phi^{-1}(\gamma) \Longleftrightarrow\gamma\textrm{ is }\C^{n+1,\beta}.$$
\end{lemma}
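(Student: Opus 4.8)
The plan is to pass to the holomorphic function $f:=\log(\phi')$, whose real and imaginary parts are exactly $\log|\phi'|$ and $\Arg(\phi')$. By Lemma~\ref{lem:conforme}, $\phi'$ extends continuously and without zeros up to the arc $I:=\phi^{-1}(\gamma)$ (there $|\phi'|\geq\beta>0$), so $f$ is a well-defined holomorphic function on $\H$, continuous up to $I$. The three conditions in the statement then read $\mathrm{Re}(f)\in\C^{n,\beta}(I)$, $\mathrm{Im}(f)\in\C^{n,\beta}(I)$, and ``$\gamma$ is $\C^{n+1,\beta}$''. I would prove the two equivalences separately.

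\emph{First equivalence} ($\mathrm{Re}(f)\Leftrightarrow\mathrm{Im}(f)$). The boundary traces of the real and imaginary parts of a function holomorphic in a half-plane are harmonic conjugates, hence linked by the Hilbert transform $H$. The essential analytic input is Privalov's theorem: the conjugation operator is bounded on $\C^{0,\beta}(\R)$ precisely for $\beta\in(0,1)$, and since it commutes with $d/dt$ it is bounded on $\C^{n,\beta}(\R)$ as well. This is exactly where the restriction $\beta\in(0,1)$ is used, and why the conclusion would fail for integer Hölder classes. Since we only control regularity on the arc $I$, I would localize with a smooth cutoff $\chi$ supported in $I$: the commutator $[H,\chi]$ has a smooth off-diagonal kernel and is therefore regularizing, so it does not affect the $\C^{n,\beta}$ regularity; combined with interior elliptic estimates for the harmonic conjugate, this upgrades the global boundedness into the local equivalence on $I$.

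\emph{Second equivalence} (regularity of $f$ $\Leftrightarrow$ regularity of $\gamma$). One direction is elementary: once $\mathrm{Re}(f),\mathrm{Im}(f)\in\C^{n,\beta}(I)$, we get $\phi'=e^{f}\in\C^{n,\beta}(I)$, and since $\phi'$ does not vanish, integration gives $\phi\in\C^{n+1,\beta}$ up to $I$ with $\phi'\neq0$; thus $\gamma=\phi(I)$ is the image of a $\C^{n+1,\beta}$ immersion and is itself a $\C^{n+1,\beta}$ curve. The converse is the genuine Kellogg--Warschawski content: assuming $\gamma\in\C^{n+1,\beta}$, I would straighten $\gamma$ locally, reflect $\phi$ across the flattened arc by Schwarz reflection, and represent $\log(\phi')$ through a Cauchy-type integral whose density is the tangent-angle function of the curve; the $\C^{n,\beta}$-boundedness of this singular integral (again Privalov) yields $\phi'\in\C^{n,\beta}(I)$, hence both conditions on $f$.

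The main obstacle is this last, forward, direction. The tangent angle of $\gamma$ is naturally $\C^{n,\beta}$ as a function of arc length, whereas $\Arg(\phi')$ is the same angle read in the conformal parameter $t$, and the two are related by $ds=|\phi'|\,dt$, that is, through $\mathrm{Re}(f)$ itself. Disentangling this apparent circularity is precisely what the Kellogg--Warschawski estimate achieves, and it rests on the non-local conjugation/Privalov bound rather than on any elementary change of variables; the localization of that non-local operator to the arc $I$ is the delicate technical point throughout.
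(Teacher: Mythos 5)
The first thing to note is that the paper does not prove Lemma \ref{lem:conf} at all: it quotes it as the classical Kellogg--Warschawski theorem, with references to Warschawski, Garnett--Marshall and Pommerenke (\cite{W,GM05Har,P}). Your attempt is therefore more ambitious than the paper itself, and its overall architecture is the right classical one: pass to the holomorphic function $f=\log(\phi')$, use Privalov's conjugate-function bound with a cutoff localization for the equivalence $\mathrm{Re}(f)\leftrightarrow\mathrm{Im}(f)$ (this part of your sketch is essentially sound), and obtain the implication ``$f\in\C^{n,\beta}$ on $I$ $\Rightarrow$ $\gamma=\phi(I)$ is $\C^{n+1,\beta}$'' elementarily from $\phi'=e^{f}\neq0$ and integration.

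However, two steps are genuinely missing, one of them fatal. First, you start by asserting that, by Lemma \ref{lem:conforme}, $\phi'$ extends continuously and without zeros up to $I=\phi^{-1}(\gamma)$. That lemma gives no such thing: it gives continuity of $\Arg(\phi')$ on $\overline{\H}$, an $L^p$ bound on $|\phi'|$, and a positive lower bound on bounded subsets of the \emph{open} half-plane; the remark following it in the paper even stresses that for a merely $\C^1$ boundary $|\phi'|$ need not extend continuously. Continuity and non-vanishing of $\phi'$ up to $I$ is part of what must be \emph{proved} under each hypothesis, not an a priori fact. Second, and decisively, the hard implication ``$\gamma\in\C^{n+1,\beta}\Rightarrow\log\phi'\in\C^{n,\beta}(I)$'' is not actually established: you correctly identify the circularity (the tangent angle $\theta$ is $\C^{n,\beta}$ as a function of arc length $s$, while $\Arg(\phi')$ is $\theta\circ s$ in the conformal parameter $t$, and $ds=|\phi'|\,dt$ involves the unknown $\mathrm{Re}(f)$), but you then dismiss it by saying that resolving it ``is precisely what the Kellogg--Warschawski estimate achieves'' --- which is invoking the lemma to prove itself. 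The classical resolution needs a separate, softer preliminary step: for a Dini-smooth (in particular $\C^{1,\beta}$) boundary one first shows that $\phi'$ extends continuously and does not vanish on the closed half-plane (the Dini-smooth theorem in \cite{P}); at that stage only \emph{continuity} of $\theta\circ s$ is used, so no circularity arises. Only then is $t\mapsto s(t)$ bi-Lipschitz, whence $\theta\circ s\in\C^{0,\beta}$ in the parameter $t$, Privalov yields $\log\phi'\in\C^{0,\beta}$, and one bootstraps in $n$. Without this preliminary step, your reflection/Cauchy-integral sketch cannot start, because the density of that integral is exactly the function whose regularity is in question.
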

See \cite{W,GM05Har,P}.\\

\noindent{\bf Fourth step. Regularity of $\Arg(\phi')$ on $J_+$, and regularity of $\log(|\phi'|)$ on $J_-$:}
 
On the one hand, we already noticed that $\Arg(\phi')$ is a parametrization of the angle of the tangent vector to the boundary of $\partial\Om^*$. Therefore, $\Arg(\phi')$ is a harmonic function with a constant (and so very regular) trace on $\overline{J_+}$ (since $\gamma_+$ is straight).

On the other hand, from the regularity results in Sobolev spaces for problem \eqref{eq:over} (we have an elliptic equation whose second member is in $L^p(\mathcal{V})$, using Lemma \ref{lem:conforme}), we get $\widehat{u_2}\in W^{2,p}(\mathcal{V})$ for every $p\in(1,\infty)$.
Therefore, since $|\nabla \widehat{u_2}|\in W^{1,p}(\mathcal{V})$ and $\widehat{|\nabla u_2|}=|\nabla u_2|\circ\phi$ is continuous on $\mathcal{V}\cup J_-$ by \eqref{EL},
the identity
\begin{equation}
|\nabla \widehat{u_2}| = |\phi'|\widehat{|\nabla u_2|},
\end{equation}
implies that $|\phi'|$ is defined and continuous on $\mathcal{V}\cup J_-$.\\
As a consequence, we can write
\begin{equation}\label{eq:EL^}
|\nabla \widehat{u_2}| = \Lambda|\phi'|,\;on\;J_-
\end{equation}
and this links the regularity of $|\phi'|$ on $J_-$ to the one of $\widehat{u_2}$. 
We need now to take the logarithm:
\begin{lemma}\label{lem:ln}
Let $v:\mathcal{U}\rightarrow\CC$ where $\mathcal{U}$ is an open bounded and Lipschitz domain of $\R^2$, and $p\in(1,\infty)$. Then
$$\left[\;v\in W^{1,p}(\mathcal{U})\textrm{ and }|v|\geq\beta>0\textrm{ in }\mathcal{U}\;\right]\Rightarrow \log(v)\in W^{1,p}(\mathcal{U}).$$
$$\left[\;v\in W^{2,p}(\mathcal{U})\textrm{ and }|v|\geq\beta>0\textrm{ in }\mathcal{U}\;\right]\Rightarrow \log(v)\in W^{2,p}(\mathcal{U}).$$
\end{lemma}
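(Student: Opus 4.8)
The plan is to prove Lemma~\ref{lem:ln}, which asserts that if $v\in W^{1,p}(\mathcal{U})$ (resp.\ $W^{2,p}(\mathcal{U})$) is a complex-valued function bounded away from zero, then $\log(v)\in W^{1,p}$ (resp.\ $W^{2,p}$). The key point is that although $\log$ is multivalued on $\CC$, the hypothesis $|v|\geq\beta>0$ guarantees that $v$ avoids the origin, so on any simply connected piece of the image a single-valued holomorphic branch of $\log$ is available; more robustly, since what we really differentiate is $\log(v)$ composed with $v$, I would work directly with the derivatives rather than worrying about global branch selection. The overall strategy is the standard one for compositions in Sobolev spaces: verify that the outer function $\log$ is smooth on a neighborhood of the range of $v$ (which it is, since $v$ misses a neighborhood of $0$), and then apply the chain rule together with the bounds coming from $|v|\geq\beta$.

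First I would treat the $W^{1,p}$ case. Here $v\in W^{1,p}(\mathcal{U})\subset L^p$, and by the chain rule for Sobolev functions composed with a $C^1$ (indeed holomorphic) function one gets, formally,
\begin{equation}
\nabla\bigl(\log v\bigr)=\frac{\nabla v}{v}.
\end{equation}
To make this rigorous I would note that $z\mapsto\log z$ is Lipschitz with bounded derivative $1/z$ on the set $\{|z|\geq\beta\}$, since there $|1/z|\leq 1/\beta$. One then invokes the classical theorem on composition $F\circ v$ with $F$ Lipschitz and $C^1$ (Marcus--Mizel / Stampacchia type results, or simply the chain rule in $W^{1,p}$): because $F'=1/z$ is bounded on the range of $v$, $F\circ v\in W^{1,p}$ with the above gradient. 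The bound $|\nabla(\log v)|=|\nabla v|/|v|\leq\beta^{-1}|\nabla v|\in L^p$ gives the $W^{1,p}$ membership, and one checks $\log v\in L^p$ separately using that $\log$ grows only logarithmically so $|\log v|$ is controlled by $|v|$ plus a constant on $\{|v|\geq\beta\}$.

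For the $W^{2,p}$ case I would differentiate once more. Writing $w:=\log v$, we have $\nabla w=\nabla v/v$, and differentiating (using the quotient/product rule valid for the relevant Sobolev products in dimension $2$) yields
\begin{equation}
\partial_{jk}w=\frac{\partial_{jk}v}{v}-\frac{(\partial_j v)(\partial_k v)}{v^2}.
\end{equation}
The first term lies in $L^p$ because $\partial_{jk}v\in L^p$ and $1/v$ is bounded by $\beta^{-1}$. The delicate term is the second: it is a product of two first derivatives. Here the main obstacle is an integrability issue, and it is resolved precisely by the Sobolev embedding available in the plane, namely that $W^{2,p}(\mathcal{U})\hookrightarrow W^{1,q}(\mathcal{U})$ for all $q<\infty$ when $\mathcal{U}\subset\R^2$ is bounded Lipschitz (using $p>1$). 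Thus $\partial_j v\in L^{2p+\delta}$ for some $\delta>0$, so the product $(\partial_j v)(\partial_k v)\in L^{p+\delta'}\subset L^p$, and division by $v^2$ (bounded by $\beta^{-2}$) keeps it in $L^p$. I expect this interplay between the quadratic first-derivative term and the two-dimensional Sobolev embedding to be the crux of the argument; away from it the proof is a routine application of the chain rule and the lower bound $|v|\geq\beta$. Finally, since the excerpt only ever applies the lemma in $\mathcal{U}\subset\R^2$, I would state and use the embedding in this dimension, which is exactly what makes the critical product term integrable.
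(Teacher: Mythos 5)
Your proof follows essentially the same route as the paper's: compute $\nabla(\log v)=\nabla v/v$ and $\partial_{jk}(\log v)=\partial_{jk}v/v-(\partial_j v)(\partial_k v)/v^2$, use $|1/v|\leq\beta^{-1}$ to absorb all terms carrying a top-order derivative, and invoke the planar Sobolev embedding to put the quadratic term $(\partial_j v)(\partial_k v)$ in $L^p$. One step, however, is misstated: the embedding $W^{2,p}(\mathcal{U})\hookrightarrow W^{1,q}(\mathcal{U})$ for \emph{all} $q<\infty$ is false in the range $1<p<2$; there one only has $\partial_j v\in W^{1,p}(\mathcal{U})\hookrightarrow L^{p^*}(\mathcal{U})$ with $p^*=\frac{2p}{2-p}$ finite (the ``all $q<\infty$'' statement holds only for $p\geq 2$). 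The conclusion you draw from it, namely $\partial_j v\in L^{2p+\delta}$ for some $\delta>0$, is nevertheless true, precisely because $p^*=\frac{2p}{2-p}>2p$ if and only if $p>1$; so your argument survives once the embedding is quoted correctly, but as written the justification fails exactly in the only nontrivial range $1<p<2$ (for $p\geq 2$ the product term is immediate).

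For comparison, the paper makes this case analysis explicit: for $p<2$ it uses $(\partial_x v)^2\in L^{p^*/2}$ with $p^*/2=\frac{p}{2-p}\geq p$ on the bounded set $\mathcal{U}$; for $p=2$ it uses $(\partial_x v)^2\in L^q$ for every $q<\infty$; for $p>2$ it uses $\nabla v\in L^\infty$; and the mixed term $\partial_x v\,\partial_y v$ is handled by a H\"older inequality with exponents $q=p^*/p$, $q'=2/p$. So the two arguments coincide in substance, and yours is fixed by replacing the false embedding with the correct (and sufficient) one. A last shared caveat: like the paper, you do not really resolve the choice of a branch of $\log$ --- note that $\{|z|\geq\beta\}$ is not simply connected, so no single-valued (let alone Lipschitz) branch of $\log$ exists on it, and your appeal to Lipschitz composition theorems needs a globally defined outer function; you at least flag the issue, and in the paper's applications $v$ takes values in sets where a branch does exist, so this looseness is harmless there.
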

\begin{center}
\begin{minipage}{0.92\linewidth}
\textbf{Proof of Lemma \ref{lem:ln}:} Although this result is more or less classical, we give a short proof. We easily compute:
$$\partial_{x}(\log v)=\partial_{x}v\frac{1}{v},$$
with $\partial_{x}v\in L^p$ and $\frac{1}{v}\in L^\infty$, and using that $\mathcal{U}$ is bounded, we get that $\partial_{x}(\log v)\in L^p(\mathcal{U})$, and similarly for $\partial_{y}v$.

About the second order derivative, 
$$\partial^2_{xx}(\log v)=\partial^2_{xx}v\frac{1}{v}-(\partial_{x}v)^2\frac{1}{v^2}$$
with $\partial^2_{xx}v\in L^p(\mathcal{U}), \frac{1}{v^2}\in L^\infty(\mathcal{U})$, and finally $(\partial_{x}v)^2\in L^{\frac{p^*}{2}}(\mathcal{U})$ where $p^*=\frac{2p}{2-p}$ if $p<2$, and $p^*=\infty$ if $p> 2$, and if p=2, $(\partial_{x}v)^2\in L^q$ for every $q<\infty$ (Sobolev imbeddings).
In all cases, $(\partial_{x}v)^2\in L^p$, and so we get $\partial^2_{xx}(\log v)\in L^p(\mathcal{U})$.
The case of $\partial^2_{yy}(\log v)$ is exactly the same.

About the case of $\partial^2_{xy}(\log v)$, using the same strategy, it remains to prove that the property $\partial_{x}v, \partial_{y} v\in L^{p^*}$ implies that  the product $\partial_{x}v\partial_{y} v\in L^{p}$, in the case $p<2$ (the cases $p=2$ and $p>2$ are easy). To that hand, we apply H\"older inequality for $q=\frac{p^*}{p}\in(1,+\infty)$ whose conjugate exponent is $q'=\frac{2}{p}$:
$$\int (\partial_{x}v \partial_{y} v)^p \leq \left(\int (\partial_{x}v)^{pq}\right)^{\frac{1}{q}} \left(\int (\partial_{y}v)^{pq'}\right)^{\frac{1}{q'}}<\infty$$
these terms being finite since $pq=p^*$ and $pq'=2\leq p^*$.
\qed
\end{minipage}\end{center}
\vspace{-0.1cm}Thus, using that $|\phi'|$ do not vanish on $\overline{\mathcal{V}}$ (see Lemma \ref{lem:conforme}), combined with \eqref{eq:EL^}, the continuity of $\nabla \widehat{u_{2}}$, and the first part of this lemma, we get that $\log(\frac{|\nabla \widehat{u_2}|}{\Lambda})\in W^{1,p}(\mathcal{V})$ (if $\mathcal{V}$ is small enough such that $\nabla \widehat{u_2}$ do not vanish on $\mathcal{V}$) and then with \eqref{eq:EL^}, $\log(|\phi'|)_{|J_-}\in W^{1-\frac{1}{p},p}(J_-\cap\partial\mathcal{V}), \forall p\in [1,+\infty).$\\

\noindent{\bf Fifth step. Regularity for a mixed problem:}

Setting $a:=\log|\phi'|=Re(\log(\phi'))$ and $b:=\Arg(\phi')=Im(\log(\phi'))$ (linked by Cauchy-Riemann equations which can be extended to $J_{+}$  by regularity),
 we now deal with the following problem:
\begin{equation}\label{eq:mixte}
\left\{\begin{array}{ccll}
\Delta a &=&0 & in\;\H\\
a&=&\log\left(\frac{|\nabla \widehat{u_{2}}|}{\Lambda}\right)=:g_D& on\;J_-\\
\partial_y a &=&- \partial_x b=:g_N& on\;J_+.
\end{array}
\right.
\end{equation}
We use the following lemma, dealing with the asymptotic expansion of the solution of a mixed boundary value problem on a domain with a flat corner:
\begin{lemma}\label{lem:dvp1}
Let $a\in L^p(\mathcal{V})$ satisfy (\ref{eq:mixte}), $p>4$, and $\mathcal{V}'\subset\mathcal{V}$ such that $\overline{\mathcal{V}'}\subset\mathcal{V}\cup\partial\H$.

If $g_D\in W^{1-\frac{1}{p},p}(J_-)$ and $g_N\in \mathcal{C}^\infty(\overline{J_+})$, then $\exists\;a_0\in\R$ such that
\begin{equation}\label{eq:dvp1}
a-a_0 r^{\frac{1}{2}}\cos(\frac{\varphi}{2})
\in W^{1,p}(\mathcal{V}')
\end{equation}
where $(r,\varphi)$ are polar coordinates, centered at $0$, and such that $\varphi=0\;on\;J_+$ and $\varphi=\pi\;on\;J_-$.

If moreover, $g_D\in W^{2-\frac{1}{p},p}(J_-)$, then $\exists\;a_1\in\R$ such that
\begin{equation}\label{eq:dvp2}
a-a_0 r^{\frac{1}{2}}\cos(\frac{\varphi}{2})-a_1r^{\frac{3}{2}}\cos(\frac{3\varphi}{2})
\in W^{2,p}(\mathcal{V}').
\end{equation}
\end{lemma}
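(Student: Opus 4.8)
\textbf{Proof strategy for Lemma \ref{lem:dvp1}.} The plan is to read \eqref{eq:mixte} as a model mixed Neumann--Dirichlet problem for the Laplacian on the flat wedge $\{(r,\varphi)\,:\,r>0,\ 0<\varphi<\pi\}$, for which the admissible singular exponents are explicit. First I would separate variables: a homogeneous harmonic function $r^{\mu}f(\varphi)$ satisfies $\Delta(r^\mu f)=0$ iff $f''+\mu^2 f=0$, and the homogeneous boundary conditions $f'(0)=0$ (Neumann on $J_+$, where $\varphi=0$) and $f(\pi)=0$ (Dirichlet on $J_-$, where $\varphi=\pi$) force $f(\varphi)=\cos(\mu\varphi)$ together with $\cos(\mu\pi)=0$, i.e. $\mu\in\{\tfrac12,\tfrac32,\tfrac52,\dots\}$. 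Hence the only candidate singular functions are $r^{1/2}\cos(\varphi/2)$, $r^{3/2}\cos(3\varphi/2),\dots$, which is exactly the list appearing in \eqref{eq:dvp1}--\eqref{eq:dvp2}; note that $\mu=0$ is not an eigenvalue, so no constant or logarithmic term intervenes.

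Next I would reduce to homogeneous boundary data by subtracting a lifting $G$ on $\mathcal V$ with $G=g_D$ on $J_-$ and $\partial_y G=g_N$ on $J_+$, with $G$ inheriting the regularity of the data (here $g_N$ is even $\C^\infty$); then $\tilde a:=a-G$ satisfies the homogeneous mixed conditions with a controlled right-hand side $\Delta\tilde a=-\Delta G$. For such a model problem the corner asymptotics are classical (Kondrat'ev/Grisvard theory): the Mellin transform in $r$ converts \eqref{eq:mixte} into the transverse ODE $\tilde a''+\lambda^2\tilde a=(\text{data})$ on $(0,\pi)$ with the above conditions, whose resolvent has simple poles precisely at $\lambda=\pm(k+\tfrac12)$. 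Inverting the transform and shifting the contour across the first pole produces the residue $a_0\,r^{1/2}\cos(\varphi/2)$ and leaves a remainder in the better space; under the stronger hypothesis $g_D\in W^{2-1/p,p}(J_-)$ one pushes the contour across the second pole and peels off $a_1\,r^{3/2}\cos(3\varphi/2)$ as well. (Equivalently, these exponents can be read off by the conformal change $\zeta=z^{1/2}$, which halves the opening to $\pi/2$ and, after even reflection across the Neumann edge, yields a smooth pure Dirichlet problem on a half-plane; the powers $r^{(2k+1)/2}\cos(\tfrac{2k+1}{2}\varphi)$ are then the pull-backs of the harmonic polynomials $\Re(\zeta^{2k+1})$.)

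The hypothesis $p>4$ governs the crucial Sobolev bookkeeping and should be stressed. Since $\nabla\!\left(r^{1/2}\cos(\varphi/2)\right)\sim r^{-1/2}$ near $0$, the function $r^{1/2}\cos(\varphi/2)$ lies in $W^{1,p}(\mathcal V)$ iff $p<4$; thus for $p>4$ this leading term genuinely fails to be $W^{1,p}$ and \emph{must} be subtracted, whereas $\nabla\!\left(r^{3/2}\cos(3\varphi/2)\right)\sim r^{1/2}\in L^p$ for every finite $p$, so the corrected remainder does land in $W^{1,p}(\mathcal V')$. The identical count with one extra derivative yields \eqref{eq:dvp2}.

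I expect the main obstacle to be the quantitative remainder estimate: one must verify that, after subtracting the explicit singular term(s), the residual solution gains exactly one (respectively two) full derivative in $L^p$, uniformly on the interior subdomain $\mathcal V'$. This requires controlling the inverse Mellin integral along the shifted contour inside the strip of $\Re\lambda$ dictated by the Mellin--Sobolev correspondence, which in turn is where the a priori assumption $a\in L^p$ is essential, in particular to exclude the negative exponent and the term $r^{-1/2}\cos(\varphi/2)$, which is not in $L^p$ for $p>4$. A secondary technical point is to build the lifting $G$ so that it realizes simultaneously the Dirichlet trace on $J_-$ and the Neumann trace on $J_+$ without itself creating a corner singularity, so that all singular behaviour is confined to the model residues computed above.
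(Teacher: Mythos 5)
Your proposal is correct and is essentially the paper's own approach: the paper gives no self-contained proof of this lemma, citing instead Grisvard \cite[Th 5.1.3.5]{G} (and Shamir \cite{S68Reg} for uniqueness below $H^1$), and its Remark \ref{rem:mixte} records precisely the ingredients you reconstruct --- the family of singular solutions $r^{n+\frac{1}{2}}\cos\left(\left(n+\frac{1}{2}\right)\varphi\right)$, the expansion of any solution as a finite combination of these plus a remainder of maximal regularity, and the exclusion of negative exponents because $r^{-\frac{1}{2}}\notin L^p$ when $p>4$. Your separation-of-variables/Mellin (Kondrat'ev-type) sketch, including the observation that $r^{\frac{1}{2}}\cos(\frac{\varphi}{2})\in W^{1,p}$ exactly when $p<4$, is the standard proof of the theorem the paper cites, so it matches the paper's (referenced) argument.
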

(see \cite[Th 5.1.3.5]{G}, and Remark \ref{rem:mixte} below for more comments and references).

We can apply the first part of this lemma to $a=\log|\phi'|$, using the previous steps.

First, we get the asymptotic expansion \eqref{eq:dvp1} and thus $\log|\phi'|\in\C^{0,\frac{1}{2}}(\overline{\mathcal{V}'})$, which is the regularity of the function $(r,\varphi)\mapsto r^{\frac{1}{2}}\cos(\frac{\varphi}{2})$ (because for $p$ large enough, $W^{1,p}$ is included in $\C^{0,\frac{1}{2}}$). Applying this result around any point of $\overline{\G}\cap\overline{\partial\Om^*\setminus\G}$, we can get that $\log(|\phi'|)$ is $\C^{0,\frac{1}{2}}$ on the whole $\overline{\H}$, and using Lemma \ref{lem:conf}, we get the announced statement about the regularity of $\Om^*$, namely that this one is  $\C^{1,\frac{1}{2}}$.\\

\noindent{\bf Sixth step. Non-regularity result:}

Now, if we assume by contradiction that $\Om^*$ is $\C^{1,\frac{1}{2}+\eps}$, then we get that $\log|\phi'|$ is $\C^{0,\frac{1}{2}+\eps}$, using again Lemma \ref{lem:conf}, and thus necessarily $a_0=0$ in the asymptotic expansion \eqref{eq:dvp1}.

Therefore $\log|\phi'|\in W^{1,p}(\mathcal{V}')$, but since $|\phi'|\in L^\infty$, this easily implies that $|\phi'|\in  W^{1,p}(\mathcal{V}')$. We can apply again the previous steps 4 and 5: since $|\phi'|\in W^{1,p}(\mathcal{V}')$, \eqref{eq:over} gives $\widehat{u_{2}}\in W^{3,p}(\mathcal{V}')$ and \eqref{eq:EL^} gives, using the second part of Lemma \ref{lem:ln}, $\log|\phi'|\in W^{2-\frac{1}{p},p}(J\cap\partial\mathcal{V}')$; we use now the expansion \eqref{eq:dvp2} in Lemma \ref{lem:dvp1}, with $a_0=0$, to get that $\log|\phi'|$ is $\C^{1,\frac{1}{2}}$ around $0$ (in a neighborhood $\mathcal{V}''$ such that $\overline{\mathcal{V}''}\subset \mathcal{V}'\cup\partial\H$). As in the previous step, we finally get that $\log(|\phi'|)$ is $\C^{1,\frac{1}{2}}$ on $\overline{\H}$, and thus $\Om^*$ is $\C^{2,\frac{1}{2}}$. This last property is a contradiction with the following non-regularity result, proved in \cite[Th 10]{HO}.\qed
\begin{prop}[Henrot-Oudet]\label{prop:nonregC2}
Let $\Om^*$ be a solution of \eqref{eq:pb}. Then, for every $\eps>0$, $\Om^*$ is not $\C^{2,\eps}$.
\end{prop}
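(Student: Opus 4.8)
The plan is to argue by contradiction and to work locally near a junction point $A$ between a segment $\gamma_+$ and a strictly convex arc $\gamma_-\subset\G$, exactly in the local picture of Figure \ref{conforme}. Suppose $\Om^*$ were $\C^{2,\eps}$ for some $\eps>0$. Then $\partial\Om^*\in\C^{2,\eps}$, and since $-\Delta u_2=\la_2(\Om^*)u_2$ with $u_2=0$ on $\partial\Om^*$, Schauder estimates up to the boundary would upgrade the eigenfunction to $u_2\in\C^{2,\eps}(\overline{\Om^*})$ near $A$. In particular $\nabla u_2$ would be $\C^{1,\eps}$ up to the boundary, so both the curvature $\kappa$ of $\partial\Om^*$ and the boundary function $P:=|\nabla u_2|^2$ would be continuous (indeed $\C^{0,\eps}$) across $A$.

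First I would record the boundary identities available on each side. Writing the equation in Fermi coordinates $(s,t)$ (arclength and inner normal distance) and using $u_2=0$ on $\partial\Om^*$, the relation $\partial_{tt}u_2=-\kappa\,\partial_t u_2$ holds all along the boundary. On the flat side $\gamma_+$ one has $\kappa\equiv0$, hence $\partial_{tt}u_2\equiv0$ there; on the strictly convex side one additionally has the overdetermined condition $|\nabla u_2|=\Lambda$ from \eqref{EL}, so $\partial_t u_2=\pm\Lambda$ and, differentiating tangentially, $\partial_{st}u_2=0$ on $\gamma_-$. Continuity of $\kappa$ at $A$ forces $\kappa(A)=0$ (matched from $\gamma_+$), while continuity of $P$ and of its tangential derivative $\partial_s P=2\,\partial_t u_2\,\partial_{st}u_2$ forces, from the $\gamma_-$ side, $\partial_s P(A)=0$, that is $\partial_{st}u_2(A)=0$.

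The heart of the matter, and the step I expect to be the main obstacle, is to turn this over-determination of the $2$-jet of $u_2$ at $A$ into a genuine contradiction. The idea is that the strictly convex arc carries, through the overdetermined condition, a differential relation linking $\kappa$ with the boundary derivatives of $u_2$ (a Serrin/Pohozaev-type identity for the pair $(u_2,\Om^*)$), which propagates the information $\kappa(A)=0$, $\partial_{st}u_2(A)=0$ into $\kappa\equiv0$ on a one-sided neighborhood of $A$ in $\gamma_-$; this contradicts the strict convexity of $\gamma_-$, on which $\kappa>0$. Making this propagation rigorous requires controlling one more order of derivatives than the $\C^{2,\eps}$ hypothesis grants for free, and is exactly where the finer analysis of Henrot and Oudet enters.

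An alternative, and perhaps more robust, route is variational. Assuming $\C^{2}$-regularity at $A$ (so $\kappa\to0$ there), one constructs an explicit one-parameter family of competitors that remain convex and preserve the area $V_0$, obtained by a localized bending of the boundary across $A$; the continuity of the curvature at the junction is precisely what leaves room to keep the perturbed domains convex. Computing the first nonvanishing shape derivative of $\la_2$ along this family, using the $H^2$-regularity of $u_2$ and the Hadamard formula as in the proof of Proposition \ref{prop:HO}, one shows it can be made negative, contradicting the minimality of $\Om^*$. Either way the conclusion is that $\Om^*$ cannot be $\C^{2,\eps}$; this is the content of \cite[Th 10]{HO}, which I would invoke to close the argument.
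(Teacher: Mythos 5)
Your proposal does not contain a proof: both of your routes stop exactly at the decisive step. In the first route, the jet computation at $A$ is correct but yields no contradiction by itself: under the $\C^{2,\eps}$ hypothesis the conditions $\kappa(A)=0$, $\partial_{st}u_2(A)=0$, $\partial_{tt}u_2(A)=0$, $\partial_t u_2(A)=\pm\Lambda$ are mutually consistent, so the $2$-jet of $u_2$ at $A$ is not over-determined in any contradictory sense. The entire content of the proposition is hidden in the ``propagation'' you postulate ($\kappa(A)=0$ forcing $\kappa\equiv 0$ on a one-sided neighborhood of $A$ in $\gamma_-$), and you offer no identity that achieves it --- indeed you concede it is the main obstacle and defer it to ``the finer analysis of Henrot and Oudet''. (Also, strict convexity in the sense of \eqref{eq:strict} does not give $\kappa>0$ pointwise on $\gamma_-$; curvature of a strictly convex $\C^2$ arc may vanish at isolated points. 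Only $\kappa\equiv 0$ on an arc is contradictory, so your intended contradiction would be fine once the propagation were established --- which it is not.) Your second, variational route is likewise only asserted: that the relevant shape derivative ``can be made negative'' along a convexity- and volume-preserving bending is precisely what would have to be proved, and constructing admissible perturbations under the convexity constraint is exactly the difficulty emphasized in Remark \ref{rem:segments}.

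Your final fallback --- invoking \cite[Th 10]{HO} --- is in fact all the paper itself does (the proposition is attributed to Henrot--Oudet and no new proof is given), but two points are then missed. First, the actual mechanism of that proof, described in Remark \ref{rem:nonregC2}, is global and of a different nature from both of your sketches: one counts the nodal domains of $\partial_x u_2$, where $x$ is the direction of a segment of $\partial\Om^*$ touching the nodal line of $u_2$, and derives a contradiction with $\C^{2,\eps}$ regularity. Second, the proof of \cite[Th 10]{HO} as written relies on the still-open claim that $\partial\Om^*$ contains exactly two segments and that they are parallel; it must be adapted (by counting nodal domains of $\partial_x u_2$ in more general configurations) to be valid under assumption \eqref{H2} alone. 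Invoking the theorem as a black box, as you do, silently inherits this gap, so even the citation step of your argument is incomplete as stated.
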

\begin{remark}\label{rem:nonregC2}
The idea for proving this proposition(\cite[Th 10]{HO}) is to count the number of nodal domains of $\partial_{x}u_{2}$ where the direction $x$ is chosen as the direction of one segment of $\partial\Om^*$ touching the nodal line of $u_{2}$. There is a small gap in the proof given by Henrot and Oudet, since they use that the boundary of $\Om^*$ only contains two segments, and that these ones are parallel; but even if these geometrical properties are announced in \cite{HO}, it remains a gap in the proof that there are only two segments (part ``At most two segments'' in \cite[Th 9]{HO}), and so these properties are still open. However, their proof of Proposition \ref{prop:nonregC2} can be easily adapted with minor revisions to our context (it suffices to count the number of nodal domains of $\partial_{x}u_{2}$ in more general cases).

Note that with more work and arguments of a completely different nature,
we can improve Proposition  \ref{prop:nonregC2} and Theorem \ref{th:convex} and prove that $\Omega^*$ is
actually not $\C^{1,\frac{1}{2}+\epsilon}$ without assumption \eqref{H2} (this will be done
in \cite{L0}). However, so far, we do need \eqref{H2} to prove the $\C^{1,\frac{1}{2}}$
regularity.
\end{remark}

\section{Remarks and extensions}

We give here a few comments on the proof given in the previous section, especially we briefly describe the general framework of overdetermined problem wherein our result can be generalized. We also deduce an application to the regularity of some optimal shapes, in the same spirit of Theorem \ref{th:convex}.

The proof given in Section \ref{sect:convex} only uses the Euler-Lagrange equation (\ref{eq:EL}), together with the fact that $u_{2}$ is an eigenfunction, that is to say:
\begin{equation}
\left\{\begin{array}{ccll}
-\Delta u_2 &=& \lambda_2(\Om^*) u_2 & in\;\Omega^*\\
u_2&=&0& on\;\partial\Omega^*\\
|\nabla u_2| &=& C^{st}=\Lambda>0& on\; \Gamma,
\end{array}
\right.
\end{equation}
where $\Gamma$ is a relatively open subset of $\partial\Om^*$. 
This kind of system is called a partially overdetermined problem; the third equation is the overdetermined part, and is supposed to give some information about the domain $\Om^*$. We refer to the paper \cite{FG} for some symmetry results about this kind of problems (the word ``partially'' means that the overdetermined equation is only valid on a part of the boundary). We focus here on the question of regularity, more precisely the regularity around $\overline{\Gamma}\cap\overline{\partial\Om\setminus\partial\Gamma}$ (see subsection \ref{ssect:reg1}); our method can easily be iterated, see Proposition \ref{prop:reg_boot}. It was already known that the overdetermined equation implies that $\Gamma$ is regular, but we show in subsection \ref{ssect:reg2} that the strategy introduced in this paper can produce a new proof of that result (in dimension 2 only). We apply these results to some other shape optimization problems, and we conclude the paper with some remarks and perspectives.

\subsection{Regularity of partially overdetermined problems}\label{ssect:reg1}

With the help of a similar analysis of tools used in the proof given in Section \ref{sect:convex}, we can get the following result, which deals with the regularity around the intersection of the overdetermined part and the remaining boundary:
\begin{prop}\label{prop:reg}
Let $\Omega$ be an open bounded set of $\R^2$, and $\Gamma$ a relatively open subset of $\partial\Om$.
We assume that
\begin{itemize}\item $\G$ has a finite number of connected components,
\item $\partial\Om$ is $\mathcal{C}^{1}$, and $\overline{\partial\Om\setminus\G}$ is $\mathcal{C}^\infty$.
\end{itemize}
Finally we assume there exists $u\in \C^2(\Omega)\cap\C^1(\Omega\cap\G)\cap L^\infty(\Om)$ satisfying
\begin{equation}\label{eq:over2}
\left\{\begin{array}{ccll}
-\Delta u &=& f(u) & in\;\Omega\\
u&=&0& on\;\partial\Omega\\
|\nabla u| &=& C^{st}=\Lambda>0& on\; \Gamma,
\end{array}
\right.
\end{equation}
where $f:\R\to\R$ is a $\C^{\infty}$ function, and assuming also $f(u)\geq0$ in a neighborhood of $\overline{\Gamma}\cap\overline{\partial\Om\setminus\G}$.
Then,
\begin{itemize}
\item either $\partial\Omega$ is $\mathcal{C}^{1,\frac{1}{2}}$ and $\forall\;\eps>0,\;\partial\Omega$ is not $\mathcal{C}^{1,\frac{1}{2}+\eps}$,
\item or $\partial\Omega$ is $\mathcal{C}^{2,\frac{1}{2}}.$
\end{itemize}
\end{prop}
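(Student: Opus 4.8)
The plan is to replay the proof of Theorem~\ref{th:convex} verbatim, \emph{localized} at each point of the junction set $\overline{\G}\cap\overline{\partial\Om\setminus\G}$. Since $\G$ has finitely many connected components, each component contributes finitely many endpoints, so this set is finite and it suffices to treat one junction point $A$, with $\gamma_-\subset\G$ on one side and $\gamma_+\subset\partial\Om\setminus\G$ on the other. Because $\partial\Om$ is $\C^1$, a small piece $\Om\cap B(A)$ is simply connected, so the Riemann mapping theorem provides a local conformal parametrization $\phi:\H\to\Om$ with $\phi(0)=A$; writing $\widehat{\cdot}=\cdot\circ\phi$ and $J_\pm=\phi^{-1}(\gamma_\pm)$, the equation transports to $-\Delta\widehat u=\widehat{f(u)}\,|\phi'|^2$ in $\H$ with $\widehat u=0$ on $\partial\H$, exactly as in \eqref{eq:over}.

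Next I would transfer regularity through Lemmas~\ref{lem:conforme} and~\ref{lem:conf}. The $\C^1$ hypothesis gives $\Arg(\phi')$ continuous on $\overline\H$ and $|\phi'|\in L^p$, so $\widehat u\in W^{2,p}(\mathcal V)$ by elliptic regularity. On the smooth side, $\overline{\partial\Om\setminus\G}$ being $\C^\infty$ forces, via Kellogg--Warschawski, $\Arg(\phi')\in\C^\infty(\overline{J_+})$, so the Neumann datum $g_N=-\partial_x\Arg(\phi')$ is $\C^\infty$ on $\overline{J_+}$. On the overdetermined side, the identity $|\nabla\widehat u|=|\phi'|\,\widehat{|\nabla u|}$ together with $|\nabla u|_{|\G}=\Lambda$ yields $|\nabla\widehat u|=\Lambda|\phi'|$ on $J_-$; taking logarithms through Lemma~\ref{lem:ln} produces the Dirichlet datum $g_D=\log(|\nabla\widehat u|/\Lambda)\in W^{1-\frac1p,p}(J_-)$. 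Thus $a=\log|\phi'|$ solves the mixed problem \eqref{eq:mixte}, and the first part of Lemma~\ref{lem:dvp1} gives $a-a_0r^{1/2}\cos(\varphi/2)\in W^{1,p}(\mathcal V')$. Hence $\log|\phi'|\in\C^{0,1/2}$ near $A$, and by Lemma~\ref{lem:conf} the arc $\partial\Om$ is $\C^{1,1/2}$ near $A$.

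The dichotomy is read off the coefficient $a_0$. If $a_0\neq0$ the profile $r^{1/2}\cos(\varphi/2)$ is genuinely present, so $\log|\phi'|$ is exactly $\C^{0,1/2}$: were $\partial\Om$ of class $\C^{1,\frac12+\eps}$, Lemma~\ref{lem:conf} would give $\log|\phi'|\in\C^{0,\frac12+\eps}$, forcing $a_0=0$, a contradiction. If instead $a_0=0$, then $a\in W^{1,p}(\mathcal V')$, hence $|\phi'|\in W^{1,p}\cap L^\infty$, and a bootstrap in \eqref{eq:over2} (the right-hand side $\widehat{f(u)}\,|\phi'|^2$ lying in $W^{1,p}$ since $f\in\C^\infty$, $\widehat u\in W^{2,p}\cap L^\infty$, $|\phi'|\in W^{1,p}\cap L^\infty$) gives $\widehat u\in W^{3,p}$, so $g_D\in W^{2-\frac1p,p}(J_-)$ by the second part of Lemma~\ref{lem:ln}; the second part of Lemma~\ref{lem:dvp1} then yields $\log|\phi'|\in\C^{1,1/2}$, i.e. $\partial\Om$ is $\C^{2,1/2}$ near $A$. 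Patching over the finitely many junction points gives the global statement: if some $a_0\neq0$ we are in the first alternative, and otherwise $\partial\Om$ is $\C^{2,1/2}$ everywhere since $\G$ and $\overline{\partial\Om\setminus\G}$ are already $\C^\infty$.

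The main obstacle is the one step that convexity supplied for free in Theorem~\ref{th:convex}, namely the lower bound $|\phi'|\geq\beta>0$ and the non-vanishing of $\nabla u$ on $\mathcal V$, which are needed to take logarithms in Lemma~\ref{lem:ln} and to make $a=\log|\phi'|$ a bona fide $L^p$ harmonic function. This is precisely where the sign hypothesis $f(u)\geq0$ enters: near $A$ we have $-\Delta\widehat u=\widehat{f(u)}\,|\phi'|^2\geq0$, so $\widehat u$ is superharmonic, vanishes on $\partial\H$ and is one-signed inside; since the half-plane satisfies a uniform interior ball condition, Hopf's lemma gives a strictly signed normal derivative, whence $|\nabla\widehat u|\geq c>0$ up to $\partial\H$ near $0$. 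Through $|\nabla\widehat u|=\Lambda|\phi'|$ on $J_-$ and the $\C^\infty$ control of $|\phi'|$ on $J_+$, this propagates to $|\phi'|\geq\beta$ on $\mathcal V$. Making this bound uniform up to (but excluding) the corner, and checking that $\log|\phi'|$ is then integrable there, is the only genuinely new verification compared with the convex case.
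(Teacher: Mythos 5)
Your proposal is correct and follows essentially the same route as the paper's own (sketched) proof: localize at a junction point $A$, transport everything to $\H$ by a conformal map, use $f(u)\geq 0$ together with Hopf/strong maximum principle to prevent $\nabla\widehat{u}$ from vanishing near the corner (the paper's remark confirms this is exactly the role of the sign hypothesis), feed $g_D=\log(|\nabla\widehat u|/\Lambda)$ and $g_N=-\partial_x\Arg(\phi')$ into the mixed problem \eqref{eq:mixte}, and read the dichotomy off the coefficient $a_0$ in Lemma~\ref{lem:dvp1}. Your two deviations are minor and harmless --- parametrizing $\Om\cap B(A)$ instead of $\Om$ (which actually repairs the implicit simple-connectedness assumption of Lemma~\ref{lem:conforme}) and organizing the alternative directly on $a_0$ rather than by contradiction --- while your unproved assertion that $\widehat u$ is one-signed (needed so that Hopf's lemma applies with the right sign) is glossed over at exactly the same level of detail as the paper's one-line appeal to the strong maximum principle.
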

\begin{remark}
The same result is true if we replace the assumption $f(u)\geq 0$ by $\Om$ is convex, or also by $\Om$ is $\C^{1,\alpha}$ for some $\alpha\in(0,1)$. Indeed, the fact that $f(u)$ is positive is used to get that $\nabla \widehat{u}$ cannot vanish on $\partial\H$. When $\Om$ is convex or $\C^{1,\alpha}$, its conformal parametrization has a derivative which cannot vanish, which leads to the same conclusion (as in the proof of Theorem \ref{th:convex}).
\end{remark}

\noindent{\bf Sketch of proof: }the strategy is exactly the same as in section \ref{sect:convex}: 
again, since $\G$ is regular (see Proposition \ref{prop:reg-over}), we work around a point $A\in \overline{\Gamma}\cap\overline{\partial\Om\setminus\G}$. We choose $\phi$ a conformal parametrization of $\Om$ such that $\phi(0)=A$, and we consider, as in the previous section, $\gamma_-\subset\Gamma$,
$\gamma_+\subset\partial\Om\setminus\Gamma$ connected and such that $\overline{\gamma_-}\cap\overline{\gamma_+}=A$, and we denote $J_{\pm}:=\phi^{-1}(\gamma_{\pm})$. We also consider a bounded semi-neighborhood $\mathcal{V}$ of 0 in $\overline{\H}$.\\
Since $\Om$ is $\C^{1}$, its conformal parametrization $\phi$ is such that $|\phi'|\in L^p(\mathcal{V})$ for all $p\in(1,\infty)$ (see Lemma \ref{lem:conforme}). Moreover, $\widehat{u}:=u\circ\phi$ is solution of
\begin{equation}\label{eq:over3}
\left\{\begin{array}{ccll}
-\Delta \widehat{u} &=& |\phi'|^2f(\widehat{u}) & in\;\H,\\
\widehat{u}&=&0& on\;\partial\H.
\end{array}
\right.
\end{equation}\\
Since $u$ is bounded and $f$ is continuous, $f(u)$ is also bounded, and so $\widehat{u}$ is in $W^{2,p}(\mathcal{V})$ for all $p\in(1,\infty)$ (and so the gradient of $\widehat{u}$ is continuous on $\overline{\mathcal{V}}$). Using that $f(u)\geq 0$ and strong maximum principle, we get that $\nabla \widehat{u}$ cannot vanish on $\partial\H\cap\overline{\mathcal{V}}$, and so on $\overline{\mathcal{V}}$ by continuity (we might need to reduce the neighborhood $\mathcal{V}$ here). Using Lemma \ref{lem:ln}, $\log\frac{|\nabla \widehat{u}|}{\Lambda}\in W^{1,p}(\mathcal{V})$.

Therefore, $a:=\log(|\phi'|)$ satisfies \eqref{eq:mixte} with $g_{D}=\log(\frac{|\nabla\widehat{u}|}{\Lambda})\in W^{1-\frac{1}{p},p}(J_-)$ and $g_{N}=-\partial_{x}\arg(\phi')\in\C^\infty(\overline{J_{+}})$. With the regularity Lemma \ref{lem:dvp1} on mixed problems, we get $a\in\C^{0,\frac{1}{2}}$ on a neighborhood of 0 in $\overline{\H}$, and since $A$ is any point of $\overline{\Gamma}\cap\overline{\partial\Om\setminus\G}$, $\partial\Om$ is globally $\C^{1,\frac{1}{2}}$.\\
If now we assume that $\Om$ is $\C^{1,\frac{1}{2}+\eps}$, this means that the first term (the one in $r^\frac{1}{2}$) is the asymptotic development of $a$ is $0$, and repeating the same arguments as before, with one more rank in the regularity, we finally get that $\partial\Om$ is $\C^{2,\frac{1}{2}}$.\qed

\begin{remark}
It is sufficient that $f$ be $C_{loc}^{0,1}$; the same proof is valid, we just have to be careful on the regularity of $\G$ and so the one of $\Arg(\phi')$ on $J_{+}$ and use a generalized version of Lemma \ref{lem:dvp1}.
\end{remark}

\subsection{Bootstrap of the strategy}

It is easy to see that the strategy used in section \ref{sect:convex} can be iterated to get the following generalization:
\begin{prop}\label{prop:reg_boot}
Under the same assumptions as in Proposition \ref{prop:reg}, we have
\begin{itemize}
\item either $\partial\Omega$ is $\mathcal{C}^\infty$,
\item or $\exists\;k\in\mathbb{N}^*,\;\left[\;\partial\Omega\in \mathcal{C}^{k,\frac{1}{2}}
,\textrm{ and }\forall\eps>0, \partial\Omega\notin\mathcal{C}^{k,\frac{1}{2}+\eps}\;\right].$
\end{itemize}
\end{prop}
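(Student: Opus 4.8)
The plan is to run the argument of Proposition \ref{prop:reg} as an induction on the order of regularity, the engine being a full asymptotic-expansion version of the mixed-boundary Lemma \ref{lem:dvp1}. Near the flat corner at $0$, with Dirichlet data on $J_-$ and Neumann data on $J_+$, the operator pencil associated to \eqref{eq:mixte} has the half-integers as its exponents, so a harmonic function $a$ with data smooth enough admits an expansion
$$a(r,\varphi)=\sum_{j=0}^{N}a_j\,r^{\frac{2j+1}{2}}\cos\!\left(\frac{(2j+1)\varphi}{2}\right)+R_N,\qquad R_N\in W^{N+1,p}(\mathcal{V}'),$$
of which \eqref{eq:dvp1} and \eqref{eq:dvp2} are the first two instances. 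Each singular profile $r^{(2j+1)/2}\cos((2j+1)\varphi/2)$ is homogeneous of degree $j+\tfrac12$, hence is exactly $\C^{j,\frac12}$ at the origin and no better.

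First I would set up the bootstrap coupling. Suppose that, at some stage, the coefficients $a_0,\dots,a_{k-2}$ all vanish, so that $a=\log|\phi'|$ is $\C^{k-1,\frac12}$ near $0$ and, by Lemma \ref{lem:conf}, $\partial\Om$ is $\C^{k,\frac12}$. Then $|\phi'|$ is $\C^{k-1,\frac12}$, so the right-hand side $|\phi'|^2 f(\widehat{u})$ of \eqref{eq:over3} gains regularity; elliptic estimates upgrade $\widehat{u}$, hence $|\nabla\widehat{u}|$, and (through Lemma \ref{lem:ln}, using that $\nabla\widehat{u}$ does not vanish) the Dirichlet datum $g_D=\log(|\nabla\widehat{u}|/\Lambda)$ on $J_-$. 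Meanwhile $g_N=-\partial_x\arg(\phi')$ stays $\C^\infty(\overline{J_+})$, since $\overline{\partial\Om\setminus\G}$ is $\C^\infty$ and Lemma \ref{lem:conf} makes $\arg(\phi')$ smooth on $J_+$. Thus the data are regular enough to extract one more term of the expansion, producing the coefficient $a_{k-1}$ together with a remainder one order smoother. This is exactly the inductive step performed once in step 6 of the proof of Theorem \ref{th:convex}, now iterated.

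The dichotomy then closes the argument. At each stage precisely one alternative occurs: either the newly produced coefficient $a_{k-1}$ is nonzero, in which case the leading singular term is $a_{k-1}r^{(2k-1)/2}$, so $a$ is exactly $\C^{k-1,\frac12}$ and not $\C^{k-1,\frac12+\eps}$ for any $\eps>0$ (if it were, the profile $r^{(2k-1)/2}$ could not appear, forcing $a_{k-1}=0$, as in step 6), whence $\partial\Om$ is $\C^{k,\frac12}$ and not $\C^{k,\frac12+\eps}$; or $a_{k-1}=0$ and we pass to the next order. If the process never stops, every $a_j$ vanishes, the remainder lies in $W^{N,p}$ for all $N$, so $a\in\C^\infty$ and Lemma \ref{lem:conf} yields $\partial\Om\in\C^\infty$.

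I expect the main obstacle to be the two quantitative points hidden in the induction: first, establishing the expansion of Lemma \ref{lem:dvp1} to all orders (the stated version stops at $r^{3/2}$), which is standard corner-regularity theory but must be invoked in the mixed Dirichlet--Neumann setting with half-integer exponents; and second, verifying that the data-bootstrap genuinely closes, i.e.\ that the vanishing of $a_0,\dots,a_{k-2}$ delivers precisely the $W^{k-\frac1p,p}(J_-)$ regularity of $g_D$ required to feed the next application of the expansion. Tracking the feedback of the regularity of $|\phi'|$ through \eqref{eq:over3} into the normal-derivative trace on $J_-$ is where the bookkeeping is most delicate.
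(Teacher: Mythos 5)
Your proposal is correct and follows essentially the same route as the paper, which proves Proposition \ref{prop:reg_boot} (deferring details to \cite{L}) precisely by iterating the main steps of Theorem \ref{th:convex} and Proposition \ref{prop:reg}: at each stage one extracts one more term of the mixed Dirichlet--Neumann corner expansion, stopping exactly when a nonzero coefficient appears, which is your induction and dichotomy verbatim. The only deviation is technical: you phrase the all-orders expansion in the Sobolev scale $W^{N,p}$ with $p>4$, whereas the paper's Lemma \ref{lem:dvp2} states it in H\"older spaces $\C^{n,\beta}$ with $\beta\neq\frac{1}{2}$; both rest on the same corner-asymptotics of \cite{G}, and the bookkeeping you flag (propagating the regularity of $g_D$ through Lemma \ref{lem:ln} and elliptic estimates at each rank) is exactly what the H\"older formulation is designed to streamline.
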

See \cite{L} for a proof, whose strategy is to iterate the main steps as in the proofs of Theorem \ref{th:convex} and Proposition \ref{prop:reg}, and using the following lemma, very similar to Lemma \ref{lem:dvp1} but adapted to H\"older spaces rather than Sobolev ones, and with more rank in the development.
\begin{lemma}\label{lem:dvp2}
Let $a\in \C^{0,\beta}(\mathcal{V})$ be solution of \eqref{eq:mixte}, $\beta\in(0,1)\setminus \{\frac{1}{2}\}$, and $\mathcal{V}'\subset\mathcal{V}$ such that $\overline{\mathcal{V}'}\subset\mathcal{V}\cup\partial\H$.\\
If $g_D\in \C^{n,\beta}(\overline{J_-})$ with $n\in\N^*$, and $g_N\in \mathcal{C}^\infty(\overline{J_+})$, then $\exists\;a_0\ldots,a_n\in\R$ such that
\begin{eqnarray}\label{eq:dvp3}
\textrm{if }\beta<\frac{1}{2},&\displaystyle{a-\sum_{i=0}^{n-1} a_i r^{i+\frac{1}{2}}cos\left((i+\frac{1}{2})\varphi\right)}& \in \C^{n,\beta}(\overline{\mathcal{V}'})\\
\textrm{if }\beta>\frac{1}{2},&\displaystyle{a-\sum_{i=0}^{n} a_i r^{i+\frac{1}{2}}cos\left((i+\frac{1}{2})\varphi\right)}& \in \C^{n,\beta}(\overline{\mathcal{V}'}).
\end{eqnarray}
where $(r,\varphi)$ are polar coordinates, centered at $0$, and such that $\varphi=0\;on\;J_+$ and $\varphi=\pi\;on\;J_-$.
\end{lemma}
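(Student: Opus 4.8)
\textbf{Proof proposal for Lemma \ref{lem:dvp2}.} The plan is to treat this as a Kondrat'ev-type corner expansion for the mixed problem \eqref{eq:mixte}, now in the Hölder (Schauder) scale rather than the Sobolev scale of Lemma \ref{lem:dvp1}, and to obtain it by induction on $n$, peeling off one singular term and gaining one derivative at each step. First I would pin down the singular functions. Separating variables in the polar coordinates $(r,\varphi)$, a harmonic function homogeneous of degree $\la$ on the sector $\{0<\varphi<\pi\}$ has the form $r^\la(A\cos(\la\varphi)+B\sin(\la\varphi))$; the homogeneous Neumann condition at $\varphi=0$ (so $\partial_\varphi=0$) forces $B=0$, and the homogeneous Dirichlet condition at $\varphi=\pi$ forces $\cos(\la\pi)=0$, i.e.\ $\la=\la_k:=k+\tfrac12$, $k\in\N$, with angular profile $\cos((k+\tfrac12)\varphi)$. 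These are exactly the functions $s_k(r,\varphi):=r^{k+\frac12}\cos((k+\tfrac12)\varphi)$ appearing in \eqref{eq:dvp3}, and the half-integer exponents are the source of the $\tfrac12$ in the statement.

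Next I would reduce to homogeneous boundary data with a controlled right-hand side. Since $g_N\in\C^\infty(\overline{J_+})$ and $g_D\in\C^{n,\beta}(\overline{J_-})$, I would subtract from $a$ an auxiliary function $v$, smooth up to the corner, matching the Neumann datum on $J_+$ to all orders and the Taylor jet of $g_D$ on $J_-$ to order $n$ at $0$ (a Whitney/Borel extension of the boundary jets). Then $w:=a-v$ solves the Laplace equation up to a smooth term, its Neumann datum on $J_+$ vanishes and its Dirichlet datum on $J_-$ vanishes to order $n+\beta$ at the corner; all subtracted contributions are $\C^{n,\beta}$ up to the corner, hence enter only the regular remainder and not the singular coefficients (the mixed conditions admit no homogeneous solution of integer degree, since $\cos(m\pi)\neq0$). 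This turns the problem into the model homogeneous mixed problem for $w$ with data vanishing to high order at $0$.

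For the model problem I would use the Mellin transform in $r$: the Mellin symbol of the mixed Laplacian on $(0,\pi)$ is meromorphic with simple poles exactly at $\la=\la_k$, and the coefficients $a_k$ are the residues, i.e.\ the stress–intensity factors obtained by pairing the data against the dual singular functions $r^{-\la_k}\cos((k+\tfrac12)\varphi)$. Shifting the inverse Mellin contour across the poles lying in the strip $\{0<\operatorname{Re}\la<n+\beta\}$ yields the finite sum of $s_k$ plus a remainder with decay $r^{n+\beta}$, which on $\mathcal{V}'$ is exactly $\C^{n,\beta}(\overline{\mathcal{V}'})$. The count of extracted terms is dictated by which $\la_k$ lie strictly below $n+\beta$: since $\la_n=n+\tfrac12<n+\beta \iff \beta>\tfrac12$, the exponent $\la_n$ must be peeled off precisely when $\beta>\tfrac12$, producing the extra term $a_n s_n$ and accounting for the two cases in \eqref{eq:dvp3}. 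The hypothesis $\beta\neq\tfrac12$ guarantees that the target line $\operatorname{Re}\la=n+\beta$ never meets a pole, so no logarithmic terms arise and all coefficients are genuine real numbers. Equivalently, I would keep as a cross-check the unfolding $z\mapsto\sqrt{z}$ (or even/odd reflection across $J_+$ and $J_-$), which converts the half-integer singularities into an ordinary Taylor expansion of a harmonic function on a sector of doubled angle. The inductive step then follows from the Schauder estimate for the mixed problem: once $k$ terms are removed, the remainder solves the same model problem with data one order smoother, so one further term can be extracted.

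The main obstacle is the Hölder-scale bookkeeping: Lemma \ref{lem:dvp1} is phrased in $W^{1,p}$/$W^{2,p}$, and transporting the corner theory to the spaces $\C^{n,\beta}$ with the exact remainder space and the precise term count across the threshold $\beta=\tfrac12$ requires the weighted Hölder version of Kondrat'ev's theorem (with the Sobolev analogue in \cite{G}). The delicate points are verifying that the remainder genuinely belongs to $\C^{n,\beta}(\overline{\mathcal{V}'})$ rather than to a merely weighted space, and confirming the absence of logarithmic coefficients, both of which hinge on the simplicity of the exponents $\la_k$ and on $n+\beta\notin\{\la_k\}$; the full argument is carried out in \cite{L}. \qed
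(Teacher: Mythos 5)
The paper offers no proof of this lemma at all: it simply cites Grisvard \cite[Th 6.4.2.6]{G}, and your Kondrat'ev/Mellin sketch is exactly the standard argument behind that citation, with every detail that can be checked against the statement coming out right — the singular exponents $k+\tfrac{1}{2}$ forced by the Neumann condition at $\varphi=0$ and the Dirichlet condition at $\varphi=\pi$, the term count ($k+\tfrac{1}{2}<n+\beta$ giving $n$ terms for $\beta<\tfrac{1}{2}$ and $n+1$ terms for $\beta>\tfrac{1}{2}$), the role of $\beta\neq\tfrac{1}{2}$ in keeping the contour off the poles, and the absence of logarithmic terms. So your proposal is correct and is essentially an unpacking of the very result the paper invokes.
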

(see \cite[Ths 6.4.2.6]{G})

\subsection{Remark on the regularity of the overdetermined part}\label{ssect:reg2}

As we said in the beginning of the proof of Theorem \ref{th:convex}, our strategy using the conformal parametrization to analyze the regularity on the extremities of an overdetermined part, can also be used to get the regularity inside the overdetermined part (in dimension two). More precisely, we give a short proof of the following result, see \cite{V} for a more general statement.

\begin{prop}\label{prop:reg-over}
Let $\Om\subset\R^2$, $\G\subset\partial\Om$ relatively open and of class $\C^{1}$, and $f\in\C^\infty(\R)$, such that there exists $u\in\C^1(\Om\cup\G)\cap\C^2(\Om)$ solution of  \eqref{eq:over2}. Then $\G$ is of class $\C^{\infty}$, and $u\in\C^\infty(\Om\cup\G)$.
\end{prop}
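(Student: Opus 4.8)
The plan is to establish the $\C^\infty$-regularity of the overdetermined part $\G$ by the same conformal-transport strategy used throughout the paper, but now applying an interior (rather than corner) regularity result for the transported mixed problem. First I would introduce the conformal parametrization $\phi:\H\to\Om$, and fix an arbitrary point $x_0\in\G$ with preimage $t_0=\phi^{-1}(x_0)\in\R=\partial\H$; since $\G$ is relatively open and $\C^1$, I may choose a bounded semi-neighborhood $\mathcal{V}$ of $t_0$ in $\overline{\H}$ whose real boundary lies inside $J:=\phi^{-1}(\G)$, so that near $t_0$ there is no junction point to worry about. Because $\G$ is $\C^1$, Lemma~\ref{lem:conforme} gives $|\phi'|\in L^p(\mathcal{V})$ for every $p\in(1,\infty)$, and the transported equation reads $-\Delta\widehat{u}=|\phi'|^2 f(\widehat{u})$ in $\H$ with $\widehat{u}=0$ on $\partial\H$, exactly as in \eqref{eq:over3}.

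Next I would run the bootstrap. Since $f(u)$ is bounded near $x_0$ (as $u\in\C^1(\Om\cup\G)$ and $f$ is continuous) and $|\phi'|\in L^p$, elliptic regularity gives $\widehat{u}\in W^{2,p}(\mathcal{V})$ for all $p$, hence $\nabla\widehat{u}$ is continuous up to $J$. The overdetermined condition $|\nabla u|=\Lambda$ on $\G$ transports, via $|\nabla\widehat{u}|=|\phi'|\,\widehat{|\nabla u|}$, to $|\nabla\widehat{u}|=\Lambda|\phi'|$ on $J$; as in the fourth and fifth steps of the proof of Theorem~\ref{th:convex}, taking logarithms (Lemma~\ref{lem:ln}) and using that $|\phi'|$ does not vanish, I obtain that $a:=\log|\phi'|$ is harmonic in $\H$ with Dirichlet datum $g_D=\log(|\nabla\widehat{u}|/\Lambda)\in W^{1-\frac1p,p}(J)$ on $J$. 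The decisive difference from the corner case is that here the \emph{entire} relevant boundary arc is of Dirichlet type (no mixed $J_+$), so $a$ solves a pure Dirichlet problem with no corner. Interior boundary regularity for the harmonic function $a$ then gives $a\in\C^{0,\beta}$ near $t_0$, which by Lemma~\ref{lem:conf} upgrades $\G$ from $\C^1$ to $\C^{1,\beta}$, and correspondingly $\widehat{u}\in W^{3,p}$, improving $g_D$ by one order. Iterating this loop indefinitely — each pass trading one degree of smoothness of $\G$ (equivalently of $g_D$, via Lemma~\ref{lem:ln} in its second-order form) for one degree of $\widehat{u}$ and back — pushes $\G$ into $\C^{n,\beta}$ for every $n$, hence $\C^\infty$; the regularity $u\in\C^\infty(\Om\cup\G)$ follows from the $\C^\infty$-regularity of $\G$, of $\phi$, and of the right-hand side $|\phi'|^2f(\widehat{u})$ together with standard Schauder estimates.

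The main obstacle I anticipate is purely bookkeeping, not conceptual: one must verify that the bootstrap loop genuinely closes at each stage, i.e. that the regularity gained for $\widehat{u}$ (through the $W^{k,p}$ elliptic estimates for \eqref{eq:over3} with right-hand side $|\phi'|^2 f(\widehat{u})$) is exactly enough to feed back, via the identity $|\nabla\widehat{u}|=\Lambda|\phi'|$ and Lemma~\ref{lem:ln}, into an improved trace regularity of $a=\log|\phi'|$ on $J$, with no loss of order. In particular one must track that the product $|\phi'|^2 f(\widehat{u})$ does not degrade the Sobolev index — here the absence of a corner is what makes the argument clean, since there is no singular $r^{1/2}$ term to obstruct the gain, so the natural tool is the standard Schauder/$L^p$ theory for the pure Dirichlet problem rather than the corner-expansion Lemma~\ref{lem:dvp1}. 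A minor technical point is ensuring $\nabla\widehat{u}$ (equivalently $\nabla u$) does not vanish on $\G$ so that $\log|\nabla\widehat{u}|$ is admissible; this follows as in the proof of Proposition~\ref{prop:reg}, either from $f(u)\ge 0$ and the strong maximum principle, or simply because the overdetermined condition $|\nabla u|=\Lambda>0$ forces non-vanishing on $\G$ directly.
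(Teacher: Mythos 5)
Your proposal follows the paper's own proof essentially step by step: transport by the conformal map $\phi$, observe that near an interior point of $J=\phi^{-1}(\G)$ the harmonic function $a=\log|\phi'|$ solves a \emph{pure} Dirichlet problem (no corner, hence no singular expansion from Lemma \ref{lem:dvp1} is needed), and bootstrap between the Sobolev regularity of $\widehat u$ given by \eqref{eq:over3} and the boundary regularity of $a$ given by the transported overdetermined condition $|\nabla\widehat u|=\Lambda|\phi'|$ on $J$, translating back to the regularity of $\G$ via Lemma \ref{lem:conf}. This is exactly the paper's iteration, so in outline the proof is right.

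There is, however, a genuine gap at the point you dismiss as ``a minor technical point'': the lower bound $|\nabla\widehat u|\geq\beta>0$ on $\overline{\mathcal V}$, equivalently the lower bound for $|\phi'|$ on $J$. Without it, the Dirichlet datum $g_D=\log(|\nabla\widehat u|/\Lambda)$ is not even a finite-valued function in $W^{1-\frac1p,p}(J)$, and the bootstrap never starts. Neither of your two justifications works here. First, $f(u)\geq 0$ is an assumption of Proposition \ref{prop:reg}, not of Proposition \ref{prop:reg-over}; in the present statement $f$ is an arbitrary smooth function, so the maximum-principle argument of Proposition \ref{prop:reg} cannot be quoted as is. Second, the ``direct'' argument is circular: on $J$ one has $|\nabla\widehat u| = |\phi'|\,\bigl(|\nabla u|\circ\phi\bigr) = \Lambda|\phi'|$, so non-vanishing of $\nabla u$ on $\G$ transfers to non-vanishing of $\nabla\widehat u$ on $J$ only if one already knows that $|\phi'|$ is bounded away from zero --- which is precisely what is in question (note that $\nabla\widehat u$ and $\nabla u$ non-vanishing are therefore \emph{not} equivalent, contrary to what you write). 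For a merely $\C^1$ arc this lower bound is not automatic: continuity and non-vanishing of $\phi'$ up to the boundary require Dini-smoothness, and for $\C^1$ boundaries $\log|\phi'|$ is only the harmonic conjugate of the continuous function $\Arg(\phi')$, so it may be unbounded below, i.e.\ $|\phi'|$ may have zero liminf at boundary points. The paper closes this gap by an argument you are missing: since $u=0$ and $|\nabla u|=\Lambda>0$ on $\G$, the function $u$ has a constant sign in a one-sided neighborhood of $\G$, hence $f(u)$ (and so $f(\widehat u)$) does not change sign near $\G$; combined with the maximum principle and the Hopf boundary point lemma applied to \eqref{eq:over3}, this gives $|\nabla\widehat u|>0$ on $J$, hence the required lower bound for $|\phi'|$ on compact subsets of $J$. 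Once this is supplied, the rest of your bootstrap (and the final Schauder step for $u\in\C^\infty(\Om\cup\G)$) goes through as in the paper.
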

\begin{remark}\label{rem:reg-over-k}
If we only assume $f\in \C^{k,\alpha}(\R)$ for some $k\in\N$ and $\alpha\in(0,1)$, we get $\G$ of class  $\C^{k+2,\alpha}$.
\end{remark}

\begin{proof}
As stated  in the previous subsection, our method enlightens a bootstrap in the regularity, given by the equation $|\nabla u|=C^{st}$: at the first step, $\Gamma$ is $\C^{1}$ and so, if $\phi$ is a conformal parametrization, $|\phi'|\in L^p(\mathcal{V})$ where $\mathcal{V}$ is a semi neighborhood of any point of $\phi^{-1}(\Gamma)$; this implies that $\widehat{u}$, solution of \eqref{eq:over3}, is $W^{2,p}(\mathcal{V})$. Then the overdetermined equation 
\begin{equation}
|\nabla \widehat{u_2}| = |\phi'|\widehat{|\nabla u_2|}=\Lambda|\phi'|\; on\; \phi^{-1}(\Gamma),
\end{equation}
gives $|\phi'|\in W^{1,p}(\mathcal{V})$, and so is $\log|\phi'|$ (since $|\nabla u|>0$ in $\G$ implies that $f(u)$ cannot change its sign around $\G$, and neither do $f(\widehat{u})$ and $|\nabla\widehat{u}|$) and thus
$\Gamma$ is $\C^{1,\alpha}$ and $u\in\C^{1,\alpha}(\Om\cup\G)$ for all $\alpha<1$. Using again the same strategy, we then get $\G\in C^{2,\alpha}$ and $u\in\C^{2,\alpha}(\Om\cup\G)$. This technique can easily be iterated, and gives the $\C^\infty$ regularity.\qed
\end{proof}
\begin{remark}\label{rem:reg-over}
We can find a similar regularity result in a more general setting (and non necessarily 2-dimensional) in \cite{V} (see also \cite[Th. 16]{FG}). Using moreover \cite{KN}, we get in fact that $\Gamma$ is analytic when $f$ is analytic (this is the case for the optimal shape for problem \eqref{eq:pb} where $f$ is linear).
\end{remark}

\subsection{Application to the regularity of optimal shapes}

The link between shape optimization and overdetermined problem is clear and has already been used in this paper: the overdetermined equation $|\nabla u|_{|\Gamma}=C^{te}>0$ can often be seen as the first optimality condition for the optimization of classical shape functionals under volume constraint, where $u=u_{\Om^*}$ is the state function of an optimal shape, and $\Gamma\subset\partial\Om^*$ is the part of the boundary which do not saturates the other constraints of the problem (if there is no other constraint in the optimization, then $\Gamma=\partial\Om^*$ and we have a classical overdetermined problem; there are some symmetry results like Serrin's one asserting that $\Om^*$ is necessarily a ball in such situation, see \cite{FG} for references on that topic). Problem \eqref{eq:pb} is an example of this situation: we get the overdetermined equation on the strictly convex parts of the boundary.

As new examples of this situation, we now analyze the regularity question on the following shape optimization problems:

\begin{minipage}{7 cm}
 \begin{equation}\label{eq:minla1}
 \min_{\stackrel{|\Om|=V_0}{\Om\subset D}} \la_1(\Om)
\end{equation}
\end{minipage}
\begin{minipage}{7 cm}
 \begin{equation}\label{eq:minJ}
 \min_{\stackrel{|\Om|=V_0}{\Om\subset D}} J(\Om)
\end{equation}
\end{minipage}\\[3mm]
where 
$J(\Om)$ is the Dirichlet energy of $\Om$ for the right hand side $1$; this means that $J(\Om)=\int_\Om \frac{1}{2} |\nabla u_\Om|^2-u_\Om$ where $u_{\Om}$ is the unique variational solution of
\begin{equation}\label{eq:lap}
u_\Om\in H^1_0(\Om),\;\;-\Delta u_\Om =1\;in\;\Om.
\end{equation}
 The functional $J(\Om)$ can also be defined by: 
 \begin{equation}\label{eq:energie}
  J(\Om)=\min_{v\in H^1_0(\Om)}\left\{\int_\Om \frac{1}{2} |\nabla v|^2-v\right\}.  
 \end{equation}
Here,
 $D$ is a bounded open set (a box), and we deduce from Proposition \ref{prop:reg_boot} the following result:
 
 \begin{prop}\label{prop:opti}
Let $V_0>0$ and $D$ a $\C^\infty$ open subset of $\R^2$. Let $\Omega^*\subset\R^2$ be a solution of \eqref{eq:minla1} or \eqref{eq:minJ}.\\
We assume:
\begin{itemize}
\item $\partial\Om^*\cap\partial D$ has a finite number of connected components,
\item any contact between $\partial\Om^*$ and $\partial D$ is tangential.
\end{itemize}
Then
\begin{itemize}
\item either $\partial\Omega^*$ is $\mathcal{C}^\infty$,
\item or $\exists\;k\in\mathbb{N}^*,\;\left[\;\partial\Omega^*\in \mathcal{C}^{k,\frac{1}{2}}
,\textrm{ and }\forall\eps>0, \partial\Omega^*\notin\mathcal{C}^{k,\frac{1}{2}+\eps}\;\right].$
\end{itemize}
\end{prop}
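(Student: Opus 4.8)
The plan is to reduce each problem to a partially overdetermined system of the form \eqref{eq:over2} and then apply Proposition \ref{prop:reg_boot} verbatim. Write $\G:=\partial\Om^*\setminus\partial D$ for the free part of the boundary, i.e. the part lying strictly inside the box $D$, so that $\partial\Om^*\setminus\G\subset\partial D$ is the contact part. First I would write the first-order optimality conditions. For deformation fields $V$ compactly supported in $\G$ --- admissible in both directions since there the inclusion constraint $\Om\subset D$ is inactive --- Hadamard's formula gives
$$\la_1'(\Om^*)(V)=-\int_{\partial\Om^*}|\nabla u|^2\,(V\cdot n)\,ds,\qquad J'(\Om^*)(V)=-\tfrac{1}{2}\int_{\partial\Om^*}|\nabla u|^2\,(V\cdot n)\,ds,$$
where $u=u_{\Om^*}$ is the first normalized eigenfunction in case \eqref{eq:minla1} and the torsion function ($-\Delta u=1$, $u\in H^1_0(\Om^*)$) in case \eqref{eq:minJ}. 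Introducing the Lagrange multiplier $\mu$ of the volume constraint, optimality forces $|\nabla u|^2=\mu$ along $\G$, hence $|\nabla u|_{|\G}=\Lambda:=\sqrt{\mu}\geq0$.

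Next I would identify the nonlinearity and check the sign hypothesis. In case \eqref{eq:minla1} one has $-\Delta u=\la_1(\Om^*)u$, so $f(u)=\la_1(\Om^*)u$ is linear, hence $\C^\infty$; since the first eigenfunction may be chosen strictly positive, $f(u)>0$ in $\Om^*$. In case \eqref{eq:minJ} the right-hand side is $f\equiv1$, which is $\C^\infty$ and positive. In both cases $u\in L^\infty(\Om^*)\cap\C^2(\Om^*)$, Hopf's lemma at a smooth boundary point gives $|\nabla u|>0$ so that $\Lambda>0$, and $u\in\C^1(\Om^*\cup\G)$ exactly as in the first step of the proof of Theorem \ref{th:convex} (the gradient lies in $H^1$ locally and has continuous trace $\Lambda$ on $\G$). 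Thus the data $(\Om^*,u,\G)$ satisfy the full PDE side of \eqref{eq:over2}.

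It then remains to verify the geometric hypotheses of Proposition \ref{prop:reg_boot}. The set $\G$ has finitely many connected components, being the relative complement in $\partial\Om^*$ of $\partial\Om^*\cap\partial D$, which has finitely many by assumption; the fixed part $\overline{\partial\Om^*\setminus\G}$ is a finite union of closed arcs of the $\C^\infty$ curve $\partial D$, hence $\C^\infty$; and $\partial\Om^*$ is globally $\C^1$. For this last point, the free part $\G$ is $\C^\infty$ by Proposition \ref{prop:reg-over} (once known to be $\C^1$), the contact part lies on $\partial D$, and the tangential-contact assumption ensures that the two tangent directions coincide at each junction point, so that no corner appears; this is precisely where the second standing assumption is used. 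With all hypotheses in force, Proposition \ref{prop:reg_boot} yields the dichotomy: either $\partial\Om^*$ is $\C^\infty$, or there exists $k\in\mathbb{N}^*$ with $\partial\Om^*\in\C^{k,\frac{1}{2}}$ and $\partial\Om^*\notin\C^{k,\frac{1}{2}+\eps}$ for every $\eps>0$, which is the claim.

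The step I expect to be the main obstacle is establishing the a priori $\C^1$-regularity of $\partial\Om^*$, which is a genuine input and not a formal consequence of the overdetermined equation. One must either invoke the free-boundary regularity theory available for these specific minimization problems to obtain $\C^1$ on the free part $\G$, or deduce it from the tangential-contact hypothesis combined with the smoothness of $\partial D$; this is also where the qualitative geometric assumptions on $\partial\Om^*\cap\partial D$ are indispensable. Once the $\C^1$-regularity is secured, the derivation of the overdetermined condition and the verification of the remaining hypotheses are routine, and the statement becomes a direct application of Proposition \ref{prop:reg_boot}.
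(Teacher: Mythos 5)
Your proposal is correct and takes essentially the same route as the paper: reduce to the partially overdetermined system \eqref{eq:over2} by writing the optimality condition $|\nabla u|_{|\G}=\Lambda>0$ on the free part of the boundary, observe that $f(u)=\lambda_1(\Om^*)u\geq 0$ or $f(u)=1\geq 0$, and conclude via Propositions \ref{prop:reg} and \ref{prop:reg_boot}. The obstacle you flag at the end --- the a priori regularity of the free boundary $\partial\Om^*\cap D$ --- is handled in the paper exactly by the first of your two suggested options, namely by citing the free-boundary regularity theory for these specific minimization problems (\cite{B04Reg,BL}); note that your second option (deducing it from tangential contact plus smoothness of $\partial D$) would not suffice by itself, since those hypotheses say nothing about the free part away from the contact set.
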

See Remark \ref{rem:reg} for a discussion about the regularity assumption of the contact between the optimal shape and the box $D$.\\

\begin{proof}
We define the free boundary $\Gamma:=\partial\Om^*\cap D$; this one is very regular as proven in \cite{B04Reg,BL}, and one can write the optimality condition $|\nabla u|_{\Gamma}=\Lambda$, where $\Lambda>0$ is a Lagrange multiplier for the volume constraint, and $u$ is either the first eigenvalue of the Dirichlet-Laplacian if we consider \eqref{eq:minla1}, or the solution of \eqref{eq:lap} if we consider \eqref{eq:minJ}. So $u$ satisfies a partially overdetermined problem like \eqref{eq:over2} with $f(u)=\lambda_1(\Om^*)u$ or $f(u)=1$. In both cases, $f(u)\geq0$, and thus $\Om^*$ satisfies assumptions of Proposition \ref{prop:reg}. Therefore $\Om^*$ is $\C^{k,\frac{1}{2}}$ with $k\in \mathbb{N}^*\cup\{\infty\}$.\qed
\end{proof}

We now focus on a particular case for the box $D$, where one can identify the exponent $k$ appearing in Proposition \ref{prop:opti}.
\begin{prop}
Let $V_0>0$ and $D=\R\times(-M,M)$ for some $M>0$. Let $\Omega^*\subset\R^2$ be a solution of \eqref{eq:minla1} or \eqref{eq:minJ}.\\
We assume that the contact between $\partial\Om^*$ and $\partial D$ is tangential. Then
\begin{itemize}
\item either $\Om^*$ is a disk,
\item or $\left[\;\partial\Omega^*\in \mathcal{C}^{1,\frac{1}{2}}
,\textrm{ and }\forall\eps>0, \partial\Omega^*\notin\mathcal{C}^{1,\frac{1}{2}+\eps}\;\right].$
\end{itemize}
\end{prop}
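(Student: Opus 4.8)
The plan is to combine the general dichotomy of Proposition \ref{prop:opti} with two geometric alternatives dictated by the special box $D=\R\times(-M,M)$, whose boundary $\partial D$ consists of the two parallel horizontal lines $\{y=\pm M\}$. Writing $\G:=\partial\Om^*\cap D$ for the free boundary, the overdetermined condition $|\nabla u|_{|\G}=\Lambda$ holds, with $u$ the first eigenfunction (so $f(u)=\la_1(\Om^*)u\geq0$) or the torsion function (so $f(u)=1\geq0$), while $\partial\Om^*\setminus\G\subset\partial D$ is a union of horizontal segments together with possibly finitely many isolated tangential contact points. By Proposition \ref{prop:opti}, $\partial\Om^*$ is either $\C^\infty$ or exactly $\C^{k,\frac12}$ for some $k\in\N^*$; it remains to pin down $k=1$, except in the disk case.

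First I would treat the case where $\partial\Om^*\cap\partial D$ has empty relative interior, so that $\G$ is a dense, relatively open $\C^1$ subset of $\partial\Om^*$. Since $|\nabla u|$ is continuous up to the (at least $\C^1$) boundary, the identity $|\nabla u|=\Lambda$ extends from $\G$ to all of $\overline{\G}=\partial\Om^*$ by continuity. Proposition \ref{prop:reg-over}, now with overdetermined part the whole boundary, upgrades $\partial\Om^*$ to $\C^\infty$, and Serrin's theorem for $-\Delta u=f(u)$ with $u>0$ then forces $\Om^*$ to be a disk. When there is no contact at all, the unconstrained minimizer is a disk directly by Faber--Krahn, respectively by Saint-Venant. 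In particular a \emph{non-disk} optimizer must contain a genuine flat segment in its boundary.

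The remaining case is when $\partial\Om^*\cap\partial D$ contains a segment $S$; then $\Om^*$ is not a disk, and I must show $\partial\Om^*$ is exactly $\C^{1,\frac12}$, i.e. exclude $\C^2$. Here I would exploit the crucial feature of the strip: every flat part of $\partial\Om^*$ lies on $\{y=\pm M\}$ and is therefore parallel to $e_x$. Choosing $x$ in this common direction and setting $w:=\partial_x u$, which solves $-\Delta w=\la_1 w$ in the eigenvalue case and $\Delta w=0$ in the torsion case and vanishes on every horizontal segment since $u\equiv0$ there, I would run the nodal-domain counting of Henrot--Oudet underlying Proposition \ref{prop:nonregC2}: if $\partial\Om^*$ were $\C^2$, the number and arrangement of nodal domains of $w$ would be incompatible with $u$ being the first eigenfunction, respectively the positive torsion function. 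This yields $\partial\Om^*\notin\C^2$; combined with the dichotomy, where any $\C^{k,\frac12}$ with $k\geq2$ or any $\C^\infty$ boundary would in particular be $\C^2$, we conclude $k=1$.

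The main obstacle is precisely this last non-regularity step: adapting the Henrot--Oudet argument, which in \cite{HO} is stated only for $\la_2$ and, as recalled in Remark \ref{rem:nonregC2}, contains a gap concerning the exact number of segments, to the present functionals and to a possibly larger number of contact segments. The favorable point is that the box already forces all flat parts to share the direction $e_x$, so the parallelism that the counting argument needs is built into the geometry of $D$; the care required lies in handling the mixed behavior of $w$ near the junctions (Dirichlet on $S$, and $|\nabla u|=\Lambda$ on $\G$) and in making the count insensitive to the number of segments.
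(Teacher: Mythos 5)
Your overall architecture (dichotomy from the general regularity results, plus Serrin in the degenerate case, plus a nodal-count to kill $\C^2$) is reasonable, but it has a genuine gap at its very first step: you invoke Proposition \ref{prop:opti}, whose hypothesis that $\partial\Om^*\cap\partial D$ has \emph{finitely many connected components} is not among the assumptions of the present proposition (only tangential contact is assumed), and you never prove it. Your phrase ``a union of horizontal segments together with possibly finitely many isolated tangential contact points'' is exactly the structural claim that needs proof: a priori the contact set could consist of infinitely many segments and points accumulating somewhere, in which case neither Proposition \ref{prop:opti} nor Proposition \ref{prop:reg} applies, the global $\C^{1,\frac{1}{2}}$ statement cannot be assembled from local junction analyses, and even the existence of a ``clean'' junction between your segment $S$ and a free-boundary arc (needed for the negative part) is unclear. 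The paper closes precisely this gap before anything else: since the strip $D=\R\times(-M,M)$ has two orthogonal symmetry directions, two Steiner symmetrizations show that $\Om^*$ has two axes of symmetry, and then, by the result cited from \cite{FGLP}, the free boundary has exactly two connected components and $\partial\Om^*\cap\partial D$ is a union of exactly two segments; only then does it apply Proposition \ref{prop:reg} to get the $\C^{1,\frac{1}{2}}$-or-$\C^{2,\frac{1}{2}}$ alternative. This symmetrization step is the heart of why the strip case can be treated without the finiteness hypothesis, and it is absent from your proof.

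Two secondary remarks. First, in your ``empty relative interior'' case you extend $|\nabla u|=\Lambda$ from the dense open free boundary to all of $\partial\Om^*$ using that ``$|\nabla u|$ is continuous up to the (at least $\C^1$) boundary''; gradient continuity up to the boundary is not guaranteed for merely $\C^1$ domains (one needs Dini-type regularity of the normal), so this also needs justification — the paper avoids the issue entirely, since its symmetry argument yields the two-segment structure directly, and it only uses Serrin in the easy case where $\Om^*$ does not touch $\partial D$ at all. Second, your exclusion of $\C^2$ by adapting the Henrot--Oudet nodal-domain count of $\partial_x u$ is the same idea the paper relies on, and the paper likewise does not carry it out but defers it to \cite{L,L0}; note, however, that this adaptation is much more tractable once one knows there are exactly two contact segments, which again comes from the symmetrization step you skipped.
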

\begin{proof}

 It is well known that the solution of \eqref{eq:minla1} or the one of \eqref{eq:minJ} is the ball of volume $V_0$, if this one is admissible (included in $D$). If such a ball does not exist, one can prove that any optimal shape $\Om^*$ should touch the boundary of the box (see \cite[Th 3.4.1]{H}: this is an easy consequence of Serrin's symmetry result if one knows the regularity of the free boundary, proven in \cite{B04Reg,BL}). 
Since the cylindrical box $D=\R\times(-M,M)$ has two orthogonal symmetry axes, one can prove using two Steiner symmetrization that $\Om^*$ also has two axes of symmetry, and therefore the free boundary necessarily has two connected components (see \cite{FGLP}), and the remaining boundary $\partial\Om^*\cap\partial D$ is the union of two segments. 
Thus, applying Proposition \ref{prop:reg}, we get  that $\partial\Omega^*$ is $\mathcal{C}^{1,\frac{1}{2}}$ or $\mathcal{C}^{2,\frac{1}{2}}$.\\
We exclude this last case with a proposition similar to Proposition \ref{prop:nonregC2} for problems \eqref{eq:minla1} and \eqref{eq:minJ}, see \cite{L,L0}\qed\end{proof}

\begin{remark}
We finally notice that this kind of regularity/singularity can be observed numerically as it is shown in \cite{L}.
\end{remark}

\subsection{Concluding remarks and perspectives}

\begin{remark}
In our mind, the non-regularity result (\ref{eq:result}) is surprising.
For instance, remind that if we consider the classical isoperimetric problem
$$P(\Omega^*)=\min_{\stackrel{|\Omega|=V_0} {\Omega\subset D}} P(\Omega),$$
where $D$ is regular enough and $P$ denotes the perimeter, the $\mathcal{C}^{1,1}$-regularity holds, as proved in $\cite{SZ}$.
In dimension 2, this result is easier, since the boundary of the optimal set is only made of pieces of
$\partial D$ and of arcs of circle with tangential contacts (free boundaries are regular and have a constant mean curvature).
\end{remark}
\begin{remark}\label{rem:reg}
{\bf On assumptions in Proposition \eqref{prop:opti}:} It seems not easy to prove this property with our strategy  based on the conformal parametrization. However, this property is certainly true, and there exist some results of that kind about the obstacle problem which could give a way to prove this property (see e.g. \cite{SU}).
\end{remark}

\begin{remark}\label{rem:mixte}
Lemma \ref{lem:dvp1} gives the asymptotic expansion for solutions of mixed elliptic problems, in a regular domain. Actually, this is a particular case of results dealing with asymptotic expansion of solutions to elliptic PDE, with Dirichlet and/or Neumann conditions, on domains with corners (here, the ``corner'' is flat, the corresponding angle is $\pi$). There is a profuse literature on that question, see for example the books \cite{G} and \cite{D88Ell}.

Statements are technical, but the idea is rather simple: for a mixed problem, we know there exist some non-regular solutions, even with smooth boundary conditions, namely
\begin{equation}\label{eq:solsing}
r^{n+\frac{1}{2}}cos\left(\left(n+\frac{1}{2}\right)\varphi\right),
\end{equation}
where $(r,\varphi)$ are polar coordinates around the meeting point of the Dirichlet condition and the Neumann one, chosen such that $\varphi=0$ on the side of Neumann condition, and $\varphi=\pi$ on the side of the Dirichlet one, and $n\in\mathbb{Z}$ (negative values of $n$ are excluded if we only consider solutions in $H^ 1$).

But above all, we know that any solution admits an asymptotic expansion around this junction point of Dirichlet and Neumann conditions, this expansion being a linear combination of these non regular solutions. Therefore, we get an asymptotic development up to a certain order of any solution of \eqref{eq:mixte}, this order being determined by the maximal regularity we can expect with the boundary conditions, and this regularity will be the one of the rest in the asymptotic expansion.

In particular, Lemma \ref{lem:dvp1} is announced for a function in  $L^p$ with $p>4$, and not in $H^1$. This requires to be careful, since the variational formulation is usually settled in $H^1$. Nevertheless, the default of uniqueness {\it below} $H^1$ is known for these problems: we know that every solution is a linear combination of the non regular solutions \eqref{eq:solsing}. The ones whose index $n$ is negative are excluded if we consider a solution in $L^p$ with $p>4$ (because $r^{-\frac{1}{2}}$ is not in $L^p$ if $p>4$), and therefore we get uniqueness in that spaces (see for example \cite{S68Reg}).
\end{remark}

\noindent{\bf Perspective}:
The final gap which has to be overcome about \eqref{eq:pb} is to prove that $\partial\Omega^*$ has a finite number of segments in its boundary, or possibly to treat the case of an infinite number of segments. In \cite{L0}, we choose this second way, and we extend the proof of this paper to get the negative part of Theorem \ref{th:convex} without any assumption, namely that $\Om^*$ is not $\C^{1,\frac{1}{2}+\eps}$.\\
Nevertheless, it seems natural to expect that $\Om^*$ has two orthogonal symmetry axes, and contains only two segments in its boundary, but these properties are still open.

\bibliographystyle{plain}

\end{document}